\makeatletter \@addtoreset{equation}{section} \makeatother
\newtheorem{theorem}{Theorem}[section]
\newtheorem{definition}{Definition}[section]
\newtheorem{lemma}{Lemma}[section]
\begin{document}
\title{Renormalized solutions for the fractional $p(x)$-Laplacian equation with $L^1$ data}
\author{Chao Zhang\\
\small Department of Mathematics and Institute for Advanced Study in Mathematics\\
\small Harbin Institute of Technology,
Harbin 150001, China\\
Xia Zhang\footnote{Corresponding author. E-mail addresses: piecesummer1984@163.com, czhangmath@hit.edu.cn.}\\
\small Department of Mathematics, Harbin Institute of Technology,
Harbin 150001, China\\
}

\date{ }
\maketitle

\begin{abstract}
{In this paper, we prove the existence and uniqueness of nonnegative renormalized solutions for the fractional $p(x)$-Laplacian problem with $L^1$ data. Our results are new even in the constant exponent fractional $p$-Laplacian equation case.
}

\medskip

\emph{\bf Keywords:}   Variable exponent Sobolev fractional space; fractional $p(x)$-Laplacian; renormalized solutions.

\emph{\bf 2000 MSC:} Primary 35D05; Secondary 35D10, 46E35.
\end{abstract}

\section{Introduction and main result}

\quad Let $\Omega\subset\mathbb{R}^{N}$ be a smooth bounded domain. In this paper, we consider the following nonlocal fractional $p(x)$-Laplacian
 equation:
\begin{eqnarray}\label{1}
\begin{cases}
\mathcal{L}u(x)=f(x),\quad &\textmd{in } \Omega,\\
u=0, &\textmd{on } \partial\Omega.
\end{cases}
\end{eqnarray}
Here we assume that
\begin{equation}
\label{assum}
0\leq f\in L^{1}(\Omega).
\end{equation}
The operator $\mathcal{L}$ is given by
\begin{equation*}
\mathcal{L}u(x):=(-\Delta)_{p(\cdot)}^s u(x)={\rm P.V.}\int_{\Omega}\frac{|u(x)-u(y)|^{p(x,y)-2}(u(x)-u(y))}{|x-y|^{N+sp(x,y)}}\,dy, \quad x\in\Omega,
\end{equation*}
where P.V. is a commonly used abbreviation in the principal value sense, $0<s<1$, $p: \overline \Omega\times \overline \Omega \to (1,\infty)$ is a continuous functions with $sp(x,y)<N$ for any $(x,y)\in\overline{\Omega}$.
This operator was first introduced by Kaufmann, Rossi and Vidal in \cite{KRV}, in which they established a compact embedding theorem and proved the existence and uniqueness of weak solutions for the fractional $p(x)$-Laplacian problem
\begin{eqnarray}
\begin{cases}
\mathcal{L} u(x)+|u(x)|^{q(x)-2}u(x)=f(x),\quad &\textmd{in } \Omega,\\
u=0, &\textmd{on } \partial\Omega,
\end{cases}
\end{eqnarray}
 provided $f\in L^{a(x)}(\Omega)$ for some $a(x)>1$.

 In the constant exponent case, the operator $\mathcal{L}$ is known as the regional fractional $p$-Laplacian, see \cite{DS}. The regional fractional Laplacian arises, for instance, from the Feller generator of the reflected symmetric stable process (\cite{BBC,CK,G,GM}). On the other hand, this operator is also  a fractional version of the $p(x)$-Laplacian, given by $\mbox{div}(|\nabla u|^{p(x)-2}\nabla u)$, which is associated with the variable exponent Sobolev space.

Regarding the non-local  $p$-Laplacian operator $(-\Delta)_p^s$, the linear elliptic case $p=2$ has been studied in \cite{AAB, KPU, LPPS}. In particular, the existence and uniqueness of renormalized solutions for the problems of the kind
$$
\beta(u)+(-\Delta)^{s}u\ni f \quad\textmd{in } \mathbb R^N
$$
was proved by Alibaud, Andreianov and Bendahmane in \cite{AAB2}, where $f\in L^1(\mathbb R^N)$ and $\beta$ is a maximal monotone graph in $\mathbb R$. Using a duality argument, in the sense of Stampacchia, Kenneth, Petitta and Ulusoy in \cite{KPU} proved the existence and uniqueness of solutions to non-local problems like $
(-\Delta)^{s}u=\mu$ in  $\mathbb R^N$ with $\mu$ being a bounded Radon measure whose support is compactly contained in $\mathbb R^N$. In \cite{KMS}, Kuusi, Mingione and Sire discussed the elliptic non-local case $p\not=2$ with measure data and developed an existence of SOLA, regularity and Wolf potential theory.  In addition, Abdellaoui \emph{et al} in \cite{AAB} investigated the fractional elliptic $p$-Laplacian equations with weight and general datum and showed that there exists a unique nonnegative entropy solution.

In this paper, we focus our attention on the existence and uniqueness of renormalized solutions for the fractional $p(x)$-Laplacian problem (\ref{1}). It is well-known that the notion of renormalized solutions was first introduced by DiPerna and Lions \cite{DL} in their study of the Boltzmann equation. Our results can be seen as a continuation of the paper \cite{KRV} and are new even in the constant exponent fractional $p$-Laplacian equation case. We construct an approximate solution sequence and
establish some \emph{a priori} estimates in order to draw a subsequence to obtain a limit function. Then based on the strong convergence of the truncations of approximate solutions and the decomposition for the region of integration according to the different contributions, we prove that this function is a renormalized solution. Moreover,  the uniqueness of renormalized solutions follows by choosing suitable test functions.

We denote $u\in\mathcal{T}^{s,p(x,y)}_{0}(\Omega)$ if $u:\Omega\rightarrow\mathbb{R}$ is measurable and $T_{k}(u)\in W_{0}^{s,p(x,y)}(\Omega)$ for any $k>0$ (see Section 2), where the truncation function $T_{k}$ is defined by
$$T_{k}(t)=\max\{-k,\min\{k,t\}\},$$  for any $t\in\mathbb{R}$.

Next we give the definition of renormalized solutions to problem (\ref{1}) which is influenced by \cite{AAB2} and \cite{ZZ}.

\begin{definition}\label{def}
We say that
$u\in\mathcal{T}^{s,p(x,y)}_{0}(\Omega)$ is a renormalized solution to \textup{(\ref{1})} if the following conditions are satisfied:

\begin{enumerate}
  \item [{\rm (i)}] $$\lim_{h\to \infty}\int_{\{(x,y)\in\Omega\times\Omega:(u(x),u(y))\in R_{h}\}}\frac{|u(x)-u(y)|^{p(x,y)-1}}{|x-y|^{N+sp(x,y)}}\,dxdy=0,$$
where $$R_{h}=\{(v,w)\in \mathbb R^{2}: \max\{|v|,|w|\}\geq h+1\ \textmd{\rm and}\  (\min\{|v|,|w|\}\leq h \\
 \ \textmd{\rm or} \ vw<0)\};$$
  \item [{\rm(ii)}] For any $\varphi\in C_{0}^{\infty}(\Omega)$ and $S\in W^{1,\infty}(\mathbb{R})$  with compact support,
\begin{eqnarray}\label{7}
\begin{split}
\int_{\Omega\times\Omega}\frac{|u(x)-u(y)|^{p(x,y)-2}(u(x)-u(y))[(S(u)\varphi)(x)-(S(u)\varphi)(y)]}
{|x-y|^{N+sp(x,y)}}\,dxdy
\\=\int_{\Omega}f S(u)\varphi\,dx
\end{split}
\end{eqnarray}
holds.
\end{enumerate}
\end{definition}

The main result of this work is the following theorem:

 \begin{theorem}
 \label{main theorem}
 Under the integrability condition \eqref{assum}, there exists a unique nonnegative renormalized solution to problem \eqref{1}.
 \end{theorem}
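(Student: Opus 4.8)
The plan is to split the argument into an existence part, built on a compactness/approximation scheme, and a uniqueness part, built on a careful choice of test functions. For existence, I would first regularize the datum: let $f_n=T_n(f)\in L^\infty(\Omega)$, so $0\le f_n\uparrow f$ in $L^1(\Omega)$. For each $n$ the problem $\mathcal L u_n=f_n$ admits a unique nonnegative weak solution $u_n\in W_0^{s,p(x,y)}(\Omega)$ by the monotone-operator theory already available for this operator (as in \cite{KRV}); nonnegativity follows by testing with $u_n^-$. The first batch of a priori estimates comes from using the truncation $T_k(u_n)$ as a test function: this controls the modular $\int_{\Omega\times\Omega}|T_k(u_n(x))-T_k(u_n(y))|^{p(x,y)}|x-y|^{-N-sp(x,y)}\,dxdy$ by $k\|f\|_{L^1}$, hence a uniform bound for $T_k(u_n)$ in $W_0^{s,p(x,y)}(\Omega)$ for every fixed $k$. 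From this I would deduce, by the compact embedding theorem of \cite{KRV} and a diagonal argument, that (up to a subsequence) $u_n\to u$ a.e. in $\Omega$, with $T_k(u)\in W_0^{s,p(x,y)}(\Omega)$, i.e. $u\in\mathcal T_0^{s,p(x,y)}(\Omega)$; a Marcinkiewicz-type estimate on the level sets $\{|u_n|>k\}$ gives the decay needed later.

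The technical heart of the existence proof is upgrading a.e. convergence to the strong convergence of the truncations, $T_k(u_n)\to T_k(u)$ in $W_0^{s,p(x,y)}(\Omega)$, and then passing to the limit in the renormalized formulation. For the strong convergence I would test the equation for $u_n$ with something like $T_k(u_n)-T_k(u_{m})$ (or with $T_\delta(u_n-T_k(u))$ after a suitable manipulation) and exploit the monotonicity inequality for the kernel together with an equi-integrability argument on the ``diagonal'' and ``off-diagonal'' parts of $\Omega\times\Omega$, splitting the region of integration according to whether both arguments are below level $k$, one is below and one above, etc. — this is the decomposition announced in the introduction. Once $T_k(u_n)\to T_k(u)$ strongly, I would verify condition (i) of Definition \ref{def}: writing $G_h(t)=t-T_h(t)$ and testing with $G_h(u_n)$ gives, uniformly in $n$, a bound on the integral over $\{(u_n(x),u_n(y))\in R_h\}$ that tends to $0$ as $h\to\infty$; passing to the limit in $n$ by Fatou and the a.e. convergence transfers this to $u$. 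For condition (ii), fix $S\in W^{1,\infty}(\mathbb R)$ with $\operatorname{supp}S\subset[-k,k]$ and $\varphi\in C_0^\infty(\Omega)$; use $S(u_n)\varphi$ as a test function in the weak formulation for $u_n$. The right-hand side converges by dominated convergence ($f_n S(u_n)\varphi\to f S(u)\varphi$, dominated by $\|S\|_\infty\|\varphi\|_\infty f$). On the left, one splits $\Omega\times\Omega$ into the region where both $u_n(x),u_n(y)$ lie in a bounded set — where the strong truncation convergence plus the continuity of the kernel in its nonlinear argument let us pass to the limit — and the complementary region, whose contribution is controlled precisely by the quantity in (i), uniformly in $n$, and hence is negligible.

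For uniqueness, let $u_1,u_2$ be two nonnegative renormalized solutions. The idea is to choose, in the renormalized formulations for $u_1$ and $u_2$, test functions of the form $S_h(u_i)\,\phi$ with $\phi$ approximating $T_\delta\big(T_k(u_1)-T_k(u_2)\big)$ and $S_h$ a cutoff that is $1$ on $[-h,h]$, subtract the two identities, and let first $h\to\infty$ (using (i) to kill the ``renormalization remainder'' terms), then exploit the strict monotonicity of $\tau\mapsto|\tau|^{p-2}\tau$ to conclude that $T_k(u_1)=T_k(u_2)$ for every $k$, hence $u_1=u_2$. Some care is needed because the truncations are not admissible test functions in $C_0^\infty$, so I would mollify and use a density/continuity argument in $W_0^{s,p(x,y)}(\Omega)$, together with the fact that the nonlinearity applied to the difference of truncations can be bounded using the finite modulars guaranteed by $u_i\in\mathcal T_0^{s,p(x,y)}(\Omega)$.

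\textbf{Main obstacle.} I expect the principal difficulty to be the strong convergence of the truncations $T_k(u_n)$ in $W_0^{s,p(x,y)}(\Omega)$ and the attendant limit passage in the bilinear/nonlinear double integral: unlike the local $p(x)$-Laplacian, the nonlocal kernel couples values of $u_n$ at distinct points, so the usual Minty-type monotonicity trick must be combined with a delicate decomposition of $\Omega\times\Omega$ and with uniform smallness estimates on the ``tail'' regions $R_h$ — getting these estimates uniform in both $n$ and $h$, while the exponent $p(x,y)$ varies, is the crux. A secondary, more bookkeeping-type obstacle is handling the variable exponent throughout: Hölder and Young inequalities acquire the usual $p^-/p^+$ corrections, and one must consistently work with modulars rather than norms when the norm is large or small.
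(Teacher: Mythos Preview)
Your proposal is correct and follows essentially the same route as the paper: approximate by $f_n=T_n(f)$, obtain uniform truncation estimates and a.e.\ convergence, upgrade to strong convergence of $T_k(u_n)$ via a decomposition of $\Omega\times\Omega$ according to the levels of $u_n$, verify (i)--(ii) of Definition~\ref{def}, and prove uniqueness by subtracting the two renormalized identities with a high-level cutoff $S_\sigma$ and the truncated difference $T_k(S_\sigma(u_1)-S_\sigma(u_2))$ as test function. Two small adjustments: for condition (i) the admissible test function is $T_1(G_h(u_n))$ rather than the unbounded $G_h(u_n)$, and in the strong-convergence step the paper exploits the monotonicity $u_n\uparrow u$ (coming from $f\ge 0$ and $f_n\uparrow f$) with the test function $T_k(u_n)-T_k(u)$, rather than a Cauchy-type argument with $T_k(u_n)-T_k(u_m)$.
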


The rest of this paper is organized as follows. In Section 2, we collect some basic properties for variable exponent Sobolev fractional spaces which will be used later. We will prove the main result in Section 3.

\section{Preliminaries}

\quad For the convenience of the readers, we recall some definitions and basic properties
 of variable exponent Sobolev fractional spaces.
 For a deeper treatment on these spaces, we refer to
\cite{DR} and \cite{KRV}.
For a smooth bounded domain $\Omega\subset\mathbb{R}^{N}$,  let
$$p:\overline{\Omega}\times\overline{\Omega}\rightarrow(1,\infty)$$
 and
 $$q:\overline{\Omega}\rightarrow(1,\infty)$$ be two continuous functions.  We assume that $p$ is symmetric, i.e. $p(x,y)=p(y,x)$ and
 $$1<p_{-}=\inf_{(x,y)\in\overline{\Omega}\times\overline{\Omega}}p(x,y)\leq p_{+}=\sup_{(x,y)\in\overline{\Omega}\times\overline{\Omega}}p(x,y)<\infty,$$
  and
  $$1<q_{-}=\inf_{x\in\overline{\Omega}}q(x)\leq q_{+}=\sup_{x\in\overline{\Omega}}q(x)<\infty.$$

For $0<s<1$, the variable exponent Sobolev fractional space $W^{s,q(x),p(x,y)}(\Omega)$ is the class of all functions $u\in L^{q(x)}(\Omega)$ such that
$$\int_{\Omega\times\Omega}\frac{|u(x)-u(y)|^{p(x,y)}}{t^{p(x,y)}|x-y|^{N+sp(x,y)}}\,dxdy<\infty,$$ for some $t>0$, where  $L^{q(x)}(\Omega)$ is the variable exponent Lebesgue space.

Define
$$[u]_{s,p(x,y)}(\Omega)=\inf\left\{t>0:\int_{\Omega\times\Omega}
\frac{|u(x)-u(y)|^{p(x,y)}}{t^{p(x,y)}|x-y|^{N+sp(x,y)}}\,dxdy\leq1\right\}.$$
It is the variable exponent seminorm. For simplicity, we omit the set $\Omega$ from the notation.
We could get the following properties:
\begin{lemma}\label{L2.1}
\textup{(1)} If $1\leq[u]_{s,p(x,y)}<\infty$, then $$([u]_{s,p(x,y)})^{p_{-}}\leq\int_{\Omega\times\Omega}
\frac{|u(x)-u(y)|^{p(x,y)}}{|x-y|^{N+sp(x,y)}}\,dxdy\leq([u]_{s,p(x,y)})^{p_{+}};$$

\textup{(2)} If $[u]_{s,p(x,y)}\leq 1$,  then $$([u]_{s,p(x,y)})^{p_{+}}\leq\int_{\Omega\times\Omega}
\frac{|u(x)-u(y)|^{p(x,y)}}{|x-y|^{N+sp(x,y)}}\,dxdy\leq([u]_{s,p(x,y)})^{p_{-}}.$$
\end{lemma}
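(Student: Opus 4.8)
\textbf{Proof proposal for Theorem \ref{main theorem}.}

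The plan is to prove the theorem in two parts, existence and uniqueness, following the classical strategy for renormalized solutions adapted to the nonlocal variable-exponent setting. For existence, I would first introduce an approximating sequence $f_n = T_n(f) \in L^\infty(\Omega)$, so that $0 \le f_n \uparrow f$ in $L^1(\Omega)$ and $\|f_n\|_{L^1(\Omega)} \le \|f\|_{L^1(\Omega)}$. For each $n$, standard monotone-operator theory (the operator $\mathcal L$ acting on $W_0^{s,p(x,y)}(\Omega)$ is monotone, coercive and hemicontinuous, as in \cite{KRV}) yields a weak solution $u_n \in W_0^{s,p(x,y)}(\Omega)$ of $\mathcal L u_n = f_n$; moreover $u_n \ge 0$ because $f_n \ge 0$ (test with $u_n^- = \min\{u_n,0\}$ and use that the bilinear-type form evaluated on $u_n^-$ is nonnegative). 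The heart of the existence argument is a priori estimates: testing the equation for $u_n$ with $T_k(u_n)$ and exploiting the inequality $|a-b|^{p-2}(a-b)(T_k(a)-T_k(b)) \ge |T_k(a)-T_k(b)|^{p}$ (valid pointwise in the exponent $p = p(x,y)$), one obtains
\begin{equation*}
\int_{\Omega\times\Omega}\frac{|T_k(u_n)(x)-T_k(u_n)(y)|^{p(x,y)}}{|x-y|^{N+sp(x,y)}}\,dxdy \le k\int_\Omega f_n\,dx \le k\|f\|_{L^1(\Omega)}.
\end{equation*}
By Lemma \ref{L2.1} this gives $[T_k(u_n)]_{s,p(x,y)} \le C(k+1)^{1/p_-}$, a bound uniform in $n$, hence $T_k(u_n)$ is bounded in $W_0^{s,p(x,y)}(\Omega)$ for every fixed $k$. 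The compact embedding from \cite{KRV} then lets me pass, via a diagonal argument over $k$, to a function $u \in \mathcal T_0^{s,p(x,y)}(\Omega)$ with $u \ge 0$, $T_k(u_n) \rightharpoonup T_k(u)$ weakly in $W_0^{s,p(x,y)}(\Omega)$ and $u_n \to u$ a.e. in $\Omega$ (using that the level sets $\{|u_n| > k\}$ have measure controlled uniformly in $n$, which follows from the Sobolev-type inequality applied to $T_k(u_n)$).

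Next I would upgrade the weak convergence of truncations to strong convergence in $W_0^{s,p(x,y)}(\Omega)$. This is done by testing the equation for $u_n - u_m$ (or for $u_n$ against a suitable regularization of $T_k(u_n) - T_k(u)$) and using the monotonicity of $\mathcal L$ together with the almost-everywhere convergence, in the spirit of the compactness lemmas of Boccardo–Murat; the outcome is that
\begin{equation*}
\frac{|u_n(x)-u_n(y)|^{p(x,y)-2}(u_n(x)-u_n(y))}{|x-y|^{(N+sp(x,y))/p'(x,y)}} \longrightarrow \frac{|u(x)-u(y)|^{p(x,y)-2}(u(x)-u(y))}{|x-y|^{(N+sp(x,y))/p'(x,y)}}
\end{equation*}
strongly in the appropriate variable-exponent Lebesgue space on $\Omega\times\Omega$, and in particular the flux converges in $L^1_{\mathrm{loc}}$. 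To verify condition (i) of Definition \ref{def}, I would test the equation for $u_n$ with a Lipschitz function of the form $\varphi_h(u_n)$ where $\varphi_h$ vanishes on $[-h,h]$ and equals $\pm 1$ outside $[-h-1,h+1]$, interpolating linearly; the right-hand side is bounded by $\int_{\{|u_n|>h\}} f_n \le \int_{\{u > h\}} f\,dx + o(1)$, which tends to $0$ as $h \to \infty$ by absolute continuity of the integral, and the left-hand side, after the region decomposition according to whether $(u_n(x),u_n(y))$ lies in $R_h$ or in its complement, controls exactly the integral in (i). Passing to the limit in $n$ (using the strong flux convergence) and then in $h$ yields (i). For condition (ii), I fix $\varphi \in C_0^\infty(\Omega)$ and $S \in W^{1,\infty}(\mathbb R)$ with $\mathrm{supp}\,S \subset [-M,M]$, write the equation for $u_n$ tested with $S(u_n)\varphi$ (legitimate since $S(u_n)\varphi \in W_0^{s,p(x,y)}(\Omega) \cap L^\infty$), split the double integral over $\Omega \times \Omega$ into the region where both $|u_n(x)|,|u_n(y)| \le M+1$ — where everything reduces to functions of the truncation $T_{M+1}(u_n)$, on which I have strong convergence — and the complementary region, which is absorbed into the tail estimate from (i); letting $n \to \infty$ recovers \eqref{7}. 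The main obstacle here is the careful bookkeeping in this region decomposition: because the exponent $p(x,y)$ varies and the kernel is singular, one must show that the "mixed" contributions (where one argument is large and the other small, or the two have opposite signs) are negligible uniformly in $n$, and this is precisely where the set $R_h$ in Definition \ref{def}(i) and the uniform estimate above are used; making the variable-exponent Hölder and Young inequalities interact correctly with Lemma \ref{L2.1} is the delicate point.

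For uniqueness, suppose $u_1$ and $u_2$ are two nonnegative renormalized solutions. The idea is to use, in \eqref{7} written for $u_1$ and for $u_2$, test functions built from $S_h$ (a smooth cutoff equal to $1$ on $[-h,h]$, supported in $[-h-1,h+1]$) and $\varphi$ approximating $\mathrm{sign}(T_k(u_1)-T_k(u_2))$ or, more precisely, $\varphi = T_\delta(T_k(u_1)-T_k(u_2))/\delta$ localized by $S_h(u_1)S_h(u_2)$; subtracting the two identities and using the strict monotonicity inequality
\begin{equation*}
\big(|a_1-b_1|^{p-2}(a_1-b_1) - |a_2-b_2|^{p-2}(a_2-b_2)\big)\big((a_1-a_2)-(b_1-b_2)\big) \ge 0,
\end{equation*}
with equality forcing $a_1-b_1 = a_2-b_2$, one concludes that the symmetrized gradient-type difference of $u_1$ and $u_2$ vanishes. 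The terms involving the cutoffs $S_h$ are controlled, as $h \to \infty$, by condition (i) applied to each $u_i$ — this is exactly why (i) is part of the definition — so they disappear in the limit; then letting $\delta \to 0$ and $k \to \infty$ gives $[T_k(u_1) - T_k(u_2)]_{s,p(x,y)} = 0$ for all $k$, hence (since $W_0^{s,p(x,y)}(\Omega)$ functions vanishing on $\partial\Omega$ with zero seminorm are zero, using a Poincaré-type inequality available in this space) $T_k(u_1) = T_k(u_2)$ for all $k$, i.e. $u_1 = u_2$. I expect the uniqueness argument to be technically lighter than existence, the only real care being, again, the limiting behavior of the localization terms, which is governed by (i).
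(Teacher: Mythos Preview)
Your proposal addresses the wrong statement. The lemma you were asked to prove is Lemma~\ref{L2.1}, the elementary modular--norm inequality for the variable-exponent Gagliardo seminorm; instead you have sketched a proof of Theorem~\ref{main theorem}, the existence and uniqueness of renormalized solutions. Nothing in your write-up bears on Lemma~\ref{L2.1} at all.

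For the record, the paper does not give a proof of Lemma~\ref{L2.1} either: Remark~2.1 simply says the argument is the same as for the Luxemburg norm on $L^{p(\cdot)}$ and omits it. The actual proof is a two-line homogeneity computation. Set $t=[u]_{s,p(x,y)}$; by definition of the Luxemburg-type seminorm and dominated/monotone convergence one has
\[
\int_{\Omega\times\Omega}\frac{|u(x)-u(y)|^{p(x,y)}}{t^{p(x,y)}|x-y|^{N+sp(x,y)}}\,dxdy=1.
\]
If $t\ge 1$ then $t^{p_-}\le t^{p(x,y)}\le t^{p_+}$ pointwise, so multiplying the integrand by $t^{p(x,y)}$ and integrating gives
\[
t^{p_-}\le \int_{\Omega\times\Omega}\frac{|u(x)-u(y)|^{p(x,y)}}{|x-y|^{N+sp(x,y)}}\,dxdy\le t^{p_+},
\]
which is part~(1). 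If $t\le 1$ the inequalities between $t^{p_-}$, $t^{p(x,y)}$, $t^{p_+}$ reverse, yielding part~(2). That is the entire content of the lemma.

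As a separate remark: your outline for Theorem~\ref{main theorem} is broadly in the spirit of the paper's Section~3 (approximation by $f_n=T_n(f)$, truncation estimates via Lemma~\ref{L2.1}, strong convergence of $T_k(u_n)$, region decomposition to pass to the limit in the renormalized formulation, and a cutoff/monotonicity argument for uniqueness), so had the target been the main theorem your plan would have been close to the paper's. But that is not what was asked.
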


\noindent \textbf{Remark 2.1.} Similarly to the discussion of the norm in variable exponent space, we could get the above results.  Here we omit the proof of Lemma \ref{L2.1}.

The space $W^{s,q(x),p(x,y)}(\Omega)$ is a Banach space with the norm
$$\|u\|_{W^{s,q(x),p(x,y)}(\Omega)}=\|u\|_{L^{q(x)}(\Omega)}+[u]_{s,p(x,y)}.$$
By $W_0^{s,q(x),p(x,y)}(\Omega)$ we denote the subspace of $W^{s,q(x),p(x,y)}(\Omega)$ which is the closure of
compactly supported functions in $\Omega$ with respect to the norm $\|\cdot\|_{W^{s,q(x),p(x,y)}(\Omega)}$.
 Especially, if $q(x)=\bar p(x):=p(x,x)$, we denote $W^{s,q(x),p(x,y)}(\Omega)$ and $W_0^{s,q(x),p(x,y)}(\Omega)$  by $W^{s,p(x,y)}(\Omega)$ and $W_0^{s,p(x,y)}(\Omega)$ (see \cite{DR}), respectivly.

For any $u\in W^{s,q(x),p(x,y)}(\Omega)$, define
\begin{equation}\label{12}
\rho(u)=\int_{\Omega\times\Omega}
\frac{|u(x)-u(y)|^{p(x,y)}}{|x-y|^{N+sp(x,y)}}\,dxdy+\int_{\Omega}|u|^{q(x)}\,dx
\end{equation}
and
\begin{equation}
\|u\|_{\rho}=\inf\left\{\lambda>0:\rho\left(\frac{u}{\lambda}\right)\leq 1\right\}.
\end{equation}
It is easy to see that $\|\cdot\|_{\rho}$ is a norm which is equivalent to the norm $\|\cdot\|_{W^{s,q(x),p(x,y)}(\Omega)}$.

\begin{lemma} {\rm ($W^{s,q(x),p(x,y)}(\Omega), \|\cdot\|_{\rho}$)} is uniformly convex and $W^{s,q(x),p(x,y)}(\Omega)$ is a reflexive Banach space.
\end{lemma}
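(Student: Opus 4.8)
The plan is to obtain everything from the modular $\rho$ defined in \eqref{12}. Since $\|\cdot\|_{\rho}$ is equivalent to $\|\cdot\|_{W^{s,q(x),p(x,y)}(\Omega)}$, the space is complete for $\|\cdot\|_{\rho}$, and reflexivity does not depend on the choice among equivalent norms; hence, by the Milman--Pettis theorem, it is enough to prove that $(W^{s,q(x),p(x,y)}(\Omega),\|\cdot\|_{\rho})$ is uniformly convex, i.e. that for every $\varepsilon\in(0,2]$ there is $\eta=\eta(\varepsilon)>0$ with $\|u\|_{\rho}=\|v\|_{\rho}=1$ and $\|u-v\|_{\rho}\geq\varepsilon$ forcing $\big\|\frac{u+v}{2}\big\|_{\rho}\leq 1-\eta$. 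I would carry this out in three stages: an exponent-uniform Clarkson-type inequality, its transfer to the modular, and the passage back from the modular to the norm.

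For the first stage: because $p_{-}\leq p(x,y)\leq p_{+}<\infty$ and $q_{-}\leq q(x)\leq q_{+}<\infty$, every exponent occurring in $\rho$ lies in a fixed compact interval $K\subset(1,\infty)$, and using the strict convexity of $t\mapsto|t|^{r}$ together with a compactness argument on $\{(a,b):a^{2}+b^{2}=1\}$ one gets: for every $\sigma\in(0,1)$ there is $\delta=\delta(\sigma,K)\in(0,1)$ such that, for all $r\in K$ and all $a,b\in\mathbb{R}$,
$$\Big|\frac{a+b}{2}\Big|^{r}\leq\frac{|a|^{r}+|b|^{r}}{2},\qquad\text{and}\qquad \Big|\frac{a+b}{2}\Big|^{r}\leq(1-\delta)\,\frac{|a|^{r}+|b|^{r}}{2}\ \text{ whenever }\ \Big|\frac{a-b}{2}\Big|^{r}\geq\sigma\,\frac{|a|^{r}+|b|^{r}}{2}$$
(homogeneity in $(a,b)$ removes the normalisation; alternatively one splits $r\geq 2$, where $\delta=1$ works, from $1<r<2$, where Clarkson's second inequality with the dual exponent is used). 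For the second stage, given $u,v$ with $\rho(u),\rho(v)\leq 1$ and $\rho\big(\frac{u-v}{2}\big)\geq\theta$ for some $\theta\in(0,1]$, I would apply this pointwise inequality with $(r,a,b)=(p(x,y),u(x)-u(y),v(x)-v(y))$ integrated against $\frac{dx\,dy}{|x-y|^{N+sp(x,y)}}$ and with $(r,a,b)=(q(x),u(x),v(x))$ integrated against $dx$, splitting both regions of integration into the part $G$ where $|\frac{a-b}{2}|^{r}\geq\sigma\frac{|a|^{r}+|b|^{r}}{2}$ and its complement $B$. Writing $I_{G}$ (resp. $I_{B}$) for the sum of the two contributions of $\int_{G}\frac{|a|^{r}+|b|^{r}}{2}$ (resp. over $B$), one has $I_{G}+I_{B}=\frac12(\rho(u)+\rho(v))\leq 1$ and $\rho\big(\frac{u-v}{2}\big)\leq I_{G}+\sigma I_{B}\leq I_{G}+\sigma$, so taking $\sigma=\theta/2$ gives $I_{G}\geq\theta/2$ and then $\rho\big(\frac{u+v}{2}\big)\leq(1-\delta)I_{G}+I_{B}\leq 1-\delta\theta/2$; this is uniform convexity of $\rho$. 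In the third stage, using the standard facts $\|w\|_{\rho}\leq 1\Leftrightarrow\rho(w)\leq 1$, $\|w\|_{\rho}=1\Rightarrow\rho(w)=1$ (valid since $p_{+},q_{+}<\infty$ make $\rho$ continuous) and $\|w\|_{\rho}^{\,P}\leq\rho(w)\leq\|w\|_{\rho}^{\,m}$ for $\|w\|_{\rho}\leq 1$, where $P=\max\{p_{+},q_{+}\}$, $m=\min\{p_{-},q_{-}\}$ (the analogue of Lemma~\ref{L2.1} for $\rho$), one sees that $\|u\|_{\rho}=\|v\|_{\rho}=1$, $\|u-v\|_{\rho}\geq\varepsilon$ imply $\rho(u)=\rho(v)=1$ and $\rho\big(\frac{u-v}{2}\big)\geq(\varepsilon/2)^{P}=:\theta$, hence $\rho\big(\frac{u+v}{2}\big)\leq 1-\delta\theta/2$, hence $\big\|\frac{u+v}{2}\big\|_{\rho}\leq(1-\delta\theta/2)^{1/P}<1$, which gives the claim with $\eta=1-(1-\delta\theta/2)^{1/P}$.

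The main obstacle is keeping all constants genuinely uniform in the variable exponents: the Clarkson-type inequality must hold with a single $\delta$ over the whole compact range $K$, and the modular/norm passage must be uniform as well — both hinge on $p_{+},q_{+}<\infty$. Apart from this, the scheme is exactly the classical proof that $L^{p(x)}(\Omega)$ is uniformly convex (cf. \cite{DR,KRV}); the only structural novelty is that $\rho$ carries, besides the $L^{q(x)}$ term, a double integral against the singular kernel $|x-y|^{-N-sp(x,y)}$, but since the pointwise inequality is used only after integration against an arbitrary nonnegative measure, that term is handled on the same footing. Equivalently, one could note that $u\mapsto\big(u,\,u(x)-u(y)\big)$ embeds $W^{s,q(x),p(x,y)}(\Omega)$, isometrically for $\|\cdot\|_{\rho}$, as a closed subspace of the variable-exponent Lebesgue space over the disjoint union of $(\Omega,dx)$ with exponent $q$ and $(\Omega\times\Omega,\,|x-y|^{-N-sp(x,y)}dxdy)$ with exponent $p$; that ambient space is uniformly convex, closed subspaces of uniformly convex spaces are uniformly convex, and one concludes again.
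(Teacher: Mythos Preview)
Your proof is correct and follows essentially the same route as the paper's: the paper also establishes uniform convexity of the modular $\rho$ (citing Theorem~3.4.9 in \cite{DHHR} for the pointwise Clarkson-type step, which uses $p_{-}>1$), then invokes the $\Delta_{2}$-condition (which uses $p_{+},q_{+}<\infty$) together with Theorem~2.4.14 in \cite{DHHR} to pass from modular uniform convexity to norm uniform convexity, and finally concludes reflexivity via Milman--Pettis. Your three stages are precisely these ingredients spelled out in full rather than referenced; the alternative closed-subspace embedding you sketch at the end is not in the paper but is an equally valid shortcut.
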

\begin{proof} The result is essentially known. Here is a short proof of it. As $p_{+}<\infty$, from the definition of $\rho$ we know that $\rho$ satisfies $\Delta_2$-condition, i.e. there exists $K\geq2$ such that
 $$\rho(2u)\leq K\rho(u)$$ for all $u\in W^{s,q(x),p(x,y)}(\Omega)$.

Since $p_{-}>1$,  similar to the proof of Theorem 3.4.9 in \cite{DHHR}, we could verify that $\rho$ is a uniformly convex semimodular, i.e. for any $\varepsilon>0$, there exists $\delta>0$ such that
$$\rho\left(\frac{u-v}{2}\right)\leq\varepsilon\frac{\rho(u)+\rho(v)}{2}$$
or
$$\rho\left(\frac{u+v}{2}\right)\leq(1-\delta)\frac{\rho(u)+\rho(v)}{2}$$
for all $u,v\in W^{s,q(x),p(x,y)}(\Omega)$.

Theorem 2.4.14 in \cite{DHHR} further implies that the norm $\|\cdot\|_{\rho}$ is uniformly convex and ($W^{s,q(x),p(x,y)}(\Omega), \|\cdot\|_{\rho}$) is uniformly convex.
Hence, $W^{s,q(x),p(x,y)}(\Omega)$ is a reflexive Banach space by virtue of Theorem 1.20 in \cite{A}.
\end{proof}


In the following, we give a compact embedding theorem into the variable exponent  Lebesgue spaces.
\begin{lemma}\label{L2.3}
Let $\Omega\subset\mathbb{R}^{N}$ be a smooth bounded domain and $s\in(0,1)$.
 Let $q(x)$, $p(x,y)$ be continuous variable exponents with $sp(x,y)<N$
 for $(x,y)\in\overline{\Omega}\times\overline{\Omega}$ and $q(x)\geq p(x,x)$ for $x\in\overline{\Omega}$. Assume that $r:\overline{\Omega}\rightarrow(1,\infty)$
is a continuous function such that
$$p^{*}(x):=\frac{Np(x,x)}{N-sp(x,x)}>r(x)\geq r_{-}>1,$$
for $x\in\overline{\Omega}$. Then, there exists a constant $C=C(N,s,p,q,r,\Omega)$ such that for every $u\in W^{s,q(x),p(x,y)}(\Omega)$, it holds that
$$\|u\|_{L^{r(x)}(\Omega)}\leq C\|u\|_{W^{s,q(x),p(x,y)}(\Omega)}.$$
That is, the space $W^{s,q(x),p(x,y)}(\Omega)$ is continuously embedded in $L^{r(x)}(\Omega)$. Moreover, this embedding is compact.

In addition, if $u\in W_0^{s,q(x),p(x,y)}(\Omega)$, it holds that
$$\|u\|_{L^{r(x)}(\Omega)}\leq C[u]_{s,p(x,y)}.$$
\end{lemma}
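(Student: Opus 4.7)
The plan is to reduce the variable-exponent compact embedding to the classical constant-exponent fractional Sobolev embedding via a localization plus finite-covering argument, in the spirit of the Kaufmann--Rossi--Vidal proof in \cite{KRV}. Fix $x_0 \in \overline\Omega$; by continuity of $p$ and $r$ together with $p^*(x_0) > r(x_0)$, I can choose a radius $\rho = \rho(x_0) > 0$ small enough that $\operatorname{diam}(B) < 1$, where $B := B_\rho(x_0)\cap \overline\Omega$, and such that
$$p_1 := \inf_{(x,y)\in B\times B} p(x,y), \qquad r_1 := \sup_{x\in B} r(x)$$
still satisfy $r_1 < p_1^* = Np_1/(N - sp_1)$. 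Then pick $s' \in (0,s)$ close enough to $s$ that the strict inequality $r_1 < Np_1/(N - s'p_1)$ persists; this slight drop in the smoothness parameter will be essential below.

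The core step is a modular comparison on each such $B$: I will bound the constant-exponent Gagliardo integral $\int_{B\times B} |u(x)-u(y)|^{p_1}/|x-y|^{N+s'p_1}\,dxdy$ by $\rho(u)$ plus a harmless constant. I split $B \times B$ at the threshold $|u(x)-u(y)| \lessgtr |x-y|^s$. On $\{|u(x)-u(y)| \leq |x-y|^s\}$ the integrand is pointwise at most $|x-y|^{-N+(s-s')p_1}$, an integrable singularity on $B\times B$ since $(s-s')p_1 > 0$. On $\{|u(x)-u(y)| > |x-y|^s\}$ the combination of $p_1 - p(x,y) \leq 0$ and $|u(x)-u(y)| > |x-y|^s$ gives
$$|u(x)-u(y)|^{p_1} \leq |u(x)-u(y)|^{p(x,y)}\,|x-y|^{s(p_1 - p(x,y))},$$
and collecting powers of $|x-y|$ (using $|x-y| < 1$) shows the integrand is dominated pointwise by the variable-exponent density $|u(x)-u(y)|^{p(x,y)}/|x-y|^{N+sp(x,y)}$. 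Combining this with the trivial embedding $L^{q(x)}(B)\hookrightarrow L^{p_1}(B)$ (which uses $q(x)\geq p(x,x)\geq p_1$ on $B$) produces
$$\|u\|_{W^{s',p_1}(B)} \leq C\bigl(1 + \|u\|_{W^{s,q(x),p(x,y)}(\Omega)}\bigr).$$

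Next I invoke the classical constant-exponent fractional Sobolev embedding $W^{s',p_1}(B)\hookrightarrow L^{r_1}(B)$, which is continuous and compact since $r_1 < Np_1/(N-s'p_1)$ and $B$ is Lipschitz. Converting modulars to norms via Lemma~\ref{L2.1} and using $|u|^{r(x)} \leq 1 + |u|^{r_1}$ on $B$ yields the local estimate $\|u\|_{L^{r(x)}(B)} \leq C\|u\|_{W^{s,q(x),p(x,y)}(\Omega)}$. Covering $\overline\Omega$ by finitely many such balls $B_1,\ldots,B_M$ and summing delivers the global continuous embedding. For compactness, a bounded sequence $(u_n)$ in $W^{s,q(x),p(x,y)}(\Omega)$ is bounded in each $W^{s',p_1^{(i)}}(B_i)$; the constant-exponent compact embedding and a finite diagonal extraction across $i=1,\dots,M$ produce a subsequence convergent in $L^{r(x)}(\Omega)$.

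Finally, for $u\in W_0^{s,q(x),p(x,y)}(\Omega)$ the Poincar\'e-type inequality $\|u\|_{L^{r(x)}} \leq C[u]_{s,p(x,y)}$ I would prove by contradiction: assume a sequence $(u_n)$ with $\|u_n\|_{L^{r(x)}}=1$ and $[u_n]_{s,p(x,y)}\to 0$; by Lemma~\ref{L2.1} $(u_n)$ is bounded in $W^{s,q(x),p(x,y)}(\Omega)$, so by the compact embedding just established a subsequence converges strongly in $L^{r(x)}$ to some $u$ with $[u]_{s,p(x,y)}=0$, forcing $u$ locally constant; the vanishing-boundary property inherent to $W_0^{s,q(x),p(x,y)}$ then gives $u\equiv 0$, contradicting $\|u\|_{L^{r(x)}}=1$. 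The main obstacle is the modular comparison of the second step: the threshold $|x-y|^s$ for the case split and the shift $s' < s$ must be coordinated so that the ``regular'' piece sits on an integrable kernel and the ``singular'' piece is absorbed by the variable-exponent modular, without the oscillation of $p$ on $B\times B$ producing an irretrievable loss.
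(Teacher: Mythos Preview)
Your argument follows precisely the Kaufmann--Rossi--Vidal localization scheme that the paper itself invokes: the paper does not supply an independent proof of Lemma~\ref{L2.3} but states in Remark~2.2(1) that the result ``can be obtained by following the same discussions in \cite{KRV}'' with $q(x)\geq p(x,x)$ in place of the strict inequality. Your reconstruction of that argument---the case split at $|u(x)-u(y)|\lessgtr|x-y|^{s}$, the drop $s'<s$ to render the ``small-difference'' piece integrable, the pointwise domination of the constant-exponent density by the variable-exponent one on the ``large-difference'' piece, and the finite-cover plus diagonal extraction for compactness---is correct and is exactly the KRV route.

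There is, however, one genuine gap in the final Poincar\'e step. From $\|u_n\|_{L^{r(x)}}=1$ and $[u_n]_{s,p(x,y)}\to 0$ you assert, citing Lemma~\ref{L2.1}, that $(u_n)$ is bounded in $W^{s,q(x),p(x,y)}(\Omega)$. But Lemma~\ref{L2.1} only relates the \emph{seminorm} $[\,\cdot\,]_{s,p(x,y)}$ to its modular; it gives no control on $\|u_n\|_{L^{q(x)}}$, and nothing in the hypotheses lets you pass from an $L^{r(x)}$ bound to an $L^{q(x)}$ bound (no relation $r\geq q$ is assumed). Without boundedness of the full $W^{s,q(x),p(x,y)}$ norm you cannot invoke the compact embedding you just proved, and the contradiction argument stalls. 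A clean repair is to run the contradiction first for the particular choice $r(x)=\bar p(x)=p(x,x)$, which is admissible since $\bar p<p^{*}$; in that case $\|u_n\|_{L^{\bar p(x)}}=1$ together with $[u_n]_{s,p(x,y)}\to 0$ genuinely bounds the full norm in $W_0^{s,p(x,y)}(\Omega)$, the compactness applies, and you obtain $\|u\|_{L^{\bar p(x)}}\leq C[u]_{s,p(x,y)}$. Since $W_0^{s,q(x),p(x,y)}(\Omega)\subset W_0^{s,p(x,y)}(\Omega)$ whenever $q\geq\bar p$, the general inequality $\|u\|_{L^{r(x)}}\leq C[u]_{s,p(x,y)}$ then follows from the continuous embedding already established.
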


\noindent\textbf{Remark 2.2.} (1) We would like to mention that the compact embedding theorem has been proved in \cite{KRV} under the assumption $q(x)>p(x,x)$. Here we give a slightly different version of compact embedding  theorem assuming that $q(x)\geq p(x,x)$ which can be obtained by following the same discussions in \cite{KRV}.

(2)  Since
$\frac{Np(x,x)}{N-sp(x,x)}>\bar p(x)\geq p_{-}>1,$
Lemma \ref{L2.3} implies that $[u]_{s,p(x,y)}$ is a norm on $W_0^{s,p(x,y)}(\Omega)$, which is equivalent to the norm $\|\cdot\|_{W^{s,p(x,y)}(\Omega)}$.

\section{Proof of the main result}

\quad  In order to discuss Eq. (\ref{1}), we  restrict ourselves to $sp_->1$ to
 have a well defined trace on $\partial\Omega$. In fact, there exist $\widetilde{s}\in(0,s)$ and $r\in(0,p_{-})$ such that $\widetilde{s}r\in(0,N)$. Therefore, $W^{s,p(x,y)}(\Omega)$ is continuously embedded in $L^{q}(\partial\Omega)$ for all $q\in[1, \frac{(N-1)r}{N-\widetilde{s}r}]$ (see \cite{DR}). That is, for any $u\in W^{s,p(x,y)}(\Omega)$, $u|_{\partial\Omega}$ is well defined.

Now we are ready to prove the main results. Some of the reasoning is
based on the ideas developed in \cite{AAB, ZZ}.

 We first introduce the approximate problems. Define $T_{n}(f)=f_{n}$,  we know that $0\leq f_{n}\leq f$ such that
$$f_{n}\rightarrow f  \quad \textup{strongly in}\ L^1(\Omega).$$
Consider the  following approximate problem of (\ref{1})
\begin{eqnarray}\label{2}
\begin{cases}
\mathcal{L}u=f_{n}(x),\quad &\textmd{in } \Omega, \\
u=0, &\textmd{on } \partial\Omega.
\end{cases}
\end{eqnarray}

\begin{lemma}\label{L1}For   any $n\in\mathbb{N}$,
 there exists a unique weak solution $u_{n}\in W_0^{s,p(x,y)}(\Omega)$ to \textup{(\ref{2})} in the sense that for any $v\in W_0^{s,p(x,y)}(\Omega)$,
 \begin{equation*}
\int_{\Omega\times\Omega}\frac{|u_{n}(x)-u_{n}(y)|^{p(x,y)-2}(u_{n}(x)-u_{n}(y))(v(x)-v(y))}{|x-y|^{N+sp(x,y)}}dxdy
=\int_{\Omega}f_{n}v\,dx.
\end{equation*}
Besides, $\{u_n\}_{n}$ is an increasing nonnegative sequence.
\end{lemma}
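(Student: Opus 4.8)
\textbf{Proof proposal for Lemma \ref{L1}.}

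The plan is to obtain the weak solution $u_n$ by a variational argument and then derive the sign and monotonicity properties by testing with suitable truncations. Since $f_n = T_n(f) \in L^\infty(\Omega) \subset L^{(\bar p(x))'}(\Omega)$, the linear functional $v \mapsto \int_\Omega f_n v\,dx$ is continuous on $W_0^{s,p(x,y)}(\Omega)$ by the continuous embedding of Lemma \ref{L2.3} into $L^{\bar p(x)}(\Omega)$. First I would introduce the energy functional
$$
J_n(u) = \int_{\Omega\times\Omega}\frac{|u(x)-u(y)|^{p(x,y)}}{p(x,y)\,|x-y|^{N+sp(x,y)}}\,dxdy - \int_\Omega f_n u\,dx
$$
on the reflexive Banach space $X:=W_0^{s,p(x,y)}(\Omega)$ (reflexivity being guaranteed by Lemma 2.2, and $[\cdot]_{s,p(x,y)}$ being an equivalent norm by Remark 2.2(2)). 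The functional $J_n$ is coercive — using Lemma \ref{L2.1} the first term dominates $\min\{[u]^{p_-}_{s,p(x,y)},[u]^{p_+}_{s,p(x,y)}\}/p_+$, while the linear term is bounded by $C\|f_n\|_{L^{(\bar p(x))'}}[u]_{s,p(x,y)}$ — and it is weakly lower semicontinuous, being the sum of a convex continuous term (convexity in the gradient-like variable $u(x)-u(y)$) and a weakly continuous linear term. Hence $J_n$ attains a minimum at some $u_n \in X$, and the Euler--Lagrange equation for $J_n$ is precisely the weak formulation stated in the lemma.

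For uniqueness, I would use the strict monotonicity of the operator $\mathcal{L}$: if $u_n, \tilde u_n$ are two weak solutions, subtracting the two identities and testing with $v = u_n - \tilde u_n$ gives
$$
\int_{\Omega\times\Omega}\frac{\big(|D_{x,y}u_n|^{p(x,y)-2}D_{x,y}u_n - |D_{x,y}\tilde u_n|^{p(x,y)-2}D_{x,y}\tilde u_n\big)\big(D_{x,y}u_n - D_{x,y}\tilde u_n\big)}{|x-y|^{N+sp(x,y)}}\,dxdy = 0,
$$
where I abbreviate $D_{x,y}w = w(x)-w(y)$. The standard monotonicity inequality for the map $\xi \mapsto |\xi|^{p-2}\xi$ in $\mathbb{R}$ forces $D_{x,y}u_n = D_{x,y}\tilde u_n$ a.e., i.e. $u_n - \tilde u_n$ is constant, and the zero boundary trace (which is well defined since $sp_->1$, as noted before the lemma) forces this constant to be zero.

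For nonnegativity, I would test the weak formulation with $v = -u_n^- = -\max\{-u_n,0\} = T_0(u_n)$ suitably — more precisely with $v = u_n^-$ and observe that $\int_\Omega f_n u_n^-\,dx \le 0$ since $f_n \ge 0$ and $u_n^- \ge 0$, while the left-hand side, after a pointwise inspection of the four sign cases for $(u_n(x),u_n(y))$, is bounded below by $[u_n^-]^{p_+}_{s,p(x,y)}$ (up to a constant), forcing $u_n^- \equiv 0$. For monotonicity $u_n \le u_{n+1}$, I would subtract the weak formulations for $u_n$ and $u_{n+1}$ and test with $v = (u_n - u_{n+1})^+$; using $f_n \le f_{n+1}$ the right-hand side is $\le 0$, while the monotonicity of $\xi\mapsto|\xi|^{p-2}\xi$ applied pointwise to $D_{x,y}u_n$ and $D_{x,y}u_{n+1}$ shows the left-hand side is $\ge 0$ and vanishes only if $(u_n-u_{n+1})^+$ is constant, hence zero by the boundary condition. \textbf{The main obstacle} I anticipate is the careful pointwise bookkeeping in the nonnegativity and comparison arguments: unlike the local case where one simply drops a nonnegative gradient term, here testing with $u_n^-$ or $(u_n-u_{n+1})^+$ produces cross terms over the region where $u_n(x)$ and $u_n(y)$ have opposite signs, and one must check that the algebraic inequality
$$
\big(|a|^{p-2}a - |b|^{p-2}b\big)\big(a^- - b^-\big) \le -\,c\,|a^--b^-|^{\,p}
$$
(and its analogue for the comparison test function) holds for all $a,b\in\mathbb{R}$ — this is the one place where genuine care, rather than routine estimation, is required.
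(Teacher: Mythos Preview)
Your approach matches the paper's: existence by the direct method on $W_0^{s,p(x,y)}(\Omega)$ (coercivity from Lemmas~\ref{L2.1} and \ref{L2.3}, then a unique minimizer whose Euler--Lagrange equation is the weak formulation), followed by the sign and monotonicity properties from $f_n\ge 0$ and $f_n\le f_{n+1}$. The paper is in fact terser than you---it writes the energy without the $1/p(x,y)$ factor, cites \cite{KRV} for the passage to the weak formulation, and dispatches nonnegativity/monotonicity in a single sentence---so your added detail (strict monotonicity for uniqueness, testing with $u_n^-$ and $(u_n-u_{n+1})^+$ for comparison) is welcome and your ``main obstacle'' is exactly the pointwise case-check that the paper suppresses.

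One slip to fix: in the nonnegativity step you write ``$\int_\Omega f_n u_n^-\,dx\le 0$ since $f_n\ge 0$ and $u_n^-\ge 0$'', but of course that integral is $\ge 0$. The argument still closes, because testing with $v=u_n^-$ makes the left-hand side $\le -\int_{\Omega\times\Omega}|u_n^-(x)-u_n^-(y)|^{p(x,y)}\,d\nu\le 0$ (this is the inequality $|a-b|^{p-2}(a-b)(a^--b^-)\le -|a^--b^-|^{p}$, which one checks in the four sign cases), so both sides must vanish. Also, $T_0\equiv 0$, not $-u_n^-$; just drop that identification.
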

\begin{proof} For any $u\in W_0^{s,p(x,y)}(\Omega)$,
define $$F(u)=\int_{\Omega\times\Omega}\frac{|u(x)-u(y)|^{p(x,y)}}{|x-y|^{N+sp(x,y)}}\,dxdy
-\int_{\Omega}f_{n}u\,dx.$$
Then, for any $u\in W_0^{s,p(x,y)}(\Omega)$ with $[u]_{s,p(x,y)}\geq1$, by using Lemmas \ref{L2.1} and \ref{L2.3} we derive that
\begin{displaymath}
\begin{split}
F(u)& \geq\frac{1}{p_{+}}\int_{\Omega\times\Omega}\frac{|u(x)-u(y)|^{p(x,y)}}{|x-y|^{N+sp(x,y)}}\,dxdy
-\|f_{n}\|_{L^{(\bar p(x))'}(\Omega)}\|u\|_{L^{\bar p(x)}(\Omega)}\\
&\geq\frac{1}{p_{+}}([u]_{s,p(x,y)})^{p_{-}}-C[u]_{s,p(x,y)},
\end{split}
\end{displaymath}
which implies that
$F$ is  coercive  on $W_0^{s,p(x,y)}(\Omega)$. Then,  there is a unique minimizer $u_{n}$ of $F$.
Similar to the proof of Theorem 1.4 in \cite{KRV}, we could also verify that $u_n$ is a weak solution to problem (\ref{1}).

Thanks to $f\geq0$ and $f_n=T_n(f)$, we get that $\{u_n\}_{n}$ is an increasing nonnegative sequence.
\end{proof}

Let $u:\Omega\rightarrow\mathbb{R}$ be a measure function. In the following, for simplicity, we denote
$$U_{n}(x,y)=|u_{n}(x)-u_{n}(y)|^{p(x,y)-2}(u_{n}(x)-u_{n}(y)),$$
 $$\{u>t\}=\{x\in\Omega:u(x)>t\}, \quad \{u\leq t\}=\{x\in\Omega:u(x)\leq t\},$$
and denote $|E|$ by the Lebesgue measure of a measurable set $E$ and
$$
d\nu=\frac{dxdy}{|x-y|^{N+sp(x,y)}}.
$$

\begin{lemma}\label{L2}
There  exists $u\in\mathcal{T}^{s,p(x,y)}_{0}(\Omega)$ such that $u_{n}\rightarrow u$ in measure and $u_{n}\rightarrow u$ a.e. in $\Omega$, as $n\rightarrow\infty$.
\end{lemma}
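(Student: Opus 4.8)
The plan is to combine the monotonicity of $\{u_n\}_n$ provided by Lemma \ref{L1} with a uniform-in-$n$ bound on the truncations $T_k(u_n)$, the latter obtained by testing the weak formulation of \eqref{2} with $T_k(u_n)$ itself. First I would take $v=T_k(u_n)$ as a test function in \eqref{2}; this is admissible since $T_k$ is Lipschitz with $T_k(0)=0$, so that $T_k(u_n)\in W_0^{s,p(x,y)}(\Omega)$ and $[T_k(u_n)]_{s,p(x,y)}\le[u_n]_{s,p(x,y)}$. Using the elementary inequality
$$|a-b|^{p-2}(a-b)\big(T_k(a)-T_k(b)\big)\ \ge\ |T_k(a)-T_k(b)|^{p},\qquad p>1,\ a,b\in\mathbb R,$$
(checked by splitting into the cases according to whether $a$ and $b$ lie inside $[-k,k]$), the left-hand side of the tested identity is bounded below by $\int_{\Omega\times\Omega}|T_k(u_n)(x)-T_k(u_n)(y)|^{p(x,y)}\,d\nu$, while the right-hand side obeys $\int_\Omega f_nT_k(u_n)\,dx\le k\|f\|_{L^1(\Omega)}$ because $0\le T_k(u_n)\le k$ and $0\le f_n\le f$. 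Hence the Gagliardo modular of $T_k(u_n)$ is at most $k\|f\|_{L^1(\Omega)}$, and Lemma \ref{L2.1} converts this into
$$[T_k(u_n)]_{s,p(x,y)}\ \le\ 1+\big(k\|f\|_{L^1(\Omega)}\big)^{1/p_-}\ =:\ C_k,$$
a bound independent of $n$.

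Next I would fix a constant exponent $r_0$ with $1<r_0<\min_{\overline\Omega}p^*(x)$ (possible since $p^*(x)>\bar p(x)\ge p_->1$ and $p^*$ is continuous on the compact set $\overline\Omega$) and apply the embedding in Lemma \ref{L2.3} with $r\equiv r_0$, getting $\|T_k(u_n)\|_{L^{r_0}(\Omega)}\le C\,[T_k(u_n)]_{s,p(x,y)}\le CC_k$. Since $u_n\ge0$, Chebyshev's inequality applied to $T_k(u_n)$ yields $k^{r_0}|\{u_n>k\}|\le\|T_k(u_n)\|_{L^{r_0}(\Omega)}^{r_0}$, so that
$$\sup_{n}\,|\{u_n>k\}|\ \le\ \Big(\frac{CC_k}{k}\Big)^{r_0}\ \longrightarrow\ 0\qquad(k\to\infty),$$
because $C_k/k\le k^{-1}+\|f\|_{L^1(\Omega)}^{1/p_-}\,k^{1/p_--1}\to0$ thanks to $p_->1$. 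Since $\{u_n\}_n$ is increasing and nonnegative by Lemma \ref{L1}, it converges pointwise to a measurable $u:\Omega\to[0,\infty]$; as $\{u>k\}=\bigcup_n\{u_n>k\}$ and $\{u=\infty\}=\bigcap_k\{u>k\}$, the estimate above gives $|\{u=\infty\}|=\lim_k\sup_n|\{u_n>k\}|=0$, so $u<\infty$ a.e.\ and $u_n\to u$ a.e.\ in $\Omega$. Because $|\Omega|<\infty$, a.e.\ convergence implies $u_n\to u$ in measure.

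It then remains to check $u\in\mathcal T^{s,p(x,y)}_0(\Omega)$, that is, $T_k(u)\in W_0^{s,p(x,y)}(\Omega)$ for every $k>0$. Fix $k$; by the previous step $\{T_k(u_n)\}_n$ is bounded in $W_0^{s,p(x,y)}(\Omega)$, so by reflexivity of this space a subsequence converges weakly to some $v_k\in W_0^{s,p(x,y)}(\Omega)$, and by the compactness of the embedding in Lemma \ref{L2.3} a further subsequence converges to $v_k$ a.e.\ in $\Omega$. Since $u_n\to u$ a.e.\ and $T_k$ is continuous, $T_k(u_n)\to T_k(u)$ a.e., whence $v_k=T_k(u)\in W_0^{s,p(x,y)}(\Omega)$. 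As $k>0$ is arbitrary, $u\in\mathcal T^{s,p(x,y)}_0(\Omega)$, which finishes the proof.

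I expect the only genuine obstacle to be the first step: setting up the test-function argument correctly in the variable-exponent framework — justifying that $T_k(u_n)$ is an admissible test function and is controlled in the seminorm, and carefully tracking the gap between the Gagliardo modular and the seminorm $[\cdot]_{s,p(x,y)}$. This is exactly where Lemma \ref{L2.1} enters, and it explains why the bound carries the factor $k^{1/p_-}$ rather than $k$, which is precisely what makes the level-set decay $C_k/k\to0$ work under $p_->1$. Once that \emph{a priori} estimate is in place, the remaining steps are soft: monotone pointwise convergence, Chebyshev's inequality, and a routine weak/strong convergence argument relying on the reflexivity of $W_0^{s,p(x,y)}(\Omega)$ and the compact embedding of Lemma \ref{L2.3}.
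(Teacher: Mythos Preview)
Your proof is correct and shares the same opening move as the paper --- testing \eqref{2} with $T_k(u_n)$ and using the pointwise inequality $|a-b|^{p-2}(a-b)(T_k(a)-T_k(b))\ge|T_k(a)-T_k(b)|^{p}$ to bound the Gagliardo modular of $T_k(u_n)$ by $Ck$, then invoking Lemma~\ref{L2.1} and the embedding of Lemma~\ref{L2.3} to obtain the uniform level-set decay $\sup_n|\{u_n>k\}|\to 0$. Where you diverge is in the extraction of the limit: the paper runs a Cauchy-in-measure argument, splitting $\{|u_n-u_m|>t\}$ into $\{u_n>k\}$, $\{u_m>k\}$ and $\{|T_k(u_n)-T_k(u_m)|>t\}$, and controlling the last piece via the $L^{\bar p(x)}$ compactness of the truncations. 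You instead exploit the monotonicity of $\{u_n\}_n$ supplied by Lemma~\ref{L1} to get pointwise convergence to $u=\sup_n u_n$ for free, and then the level-set estimate shows $|\{u=\infty\}|=0$. Your route is shorter and more elementary in this specific setting; the paper's Cauchy-in-measure argument is the standard device that would still work without monotonicity (e.g.\ for sign-changing data or more general approximations $f_n$). Both approaches conclude the same way, identifying the weak limit of $T_k(u_n)$ in $W_0^{s,p(x,y)}(\Omega)$ with $T_k(u)$ via the compact embedding.
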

\begin{proof}
Taking $T_{k}(u_{n})$ as a  test  function  in  (\ref{2}), we get
\begin{eqnarray}\label{3}
\begin{split}
&\int_{\Omega\times\Omega}U_{n}(x,y)
[T_{k}(u_{n})(x)-T_{k}(u_{n})(y)]\,d\nu=\int_{\Omega}f_{n}T_{k}(u_{n})\,dx\leq k\int_{\Omega}f_{n}\,dx\leq Ck,
\end{split}
\end{eqnarray}
where  $C$ is  independent  of  $k$ and $n$.

Note that
\begin{eqnarray*}
\begin{split}
&|T_{k}(u_{n})(x)-T_{k}(u_{n})(y)|^{p(x,y)}\leq U_{n}(x,y)
[T_{k}(u_{n})(x)-T_{k}(u_{n})(y)],
\end{split}
\end{eqnarray*}
from (\ref{3}) we obtain
\begin{equation}\label{14}
\int_{\Omega\times\Omega}|T_{k}(u_{n})(x)-T_{k}(u_{n})(y)|^{p(x,y)}d\nu\leq Ck.
\end{equation}
Then,
$\{T_{k}(u_{n})\}_{n}$  is  bounded  in  $W_0^{s,p(x,y)}(\Omega)$.
In the following,  we assume that
$$T_{k}(u_{n})\rightharpoonup v\ \ \textup{weakly\ in}\  W_0^{s,p(x,y)}(\Omega),$$ as $n\rightarrow\infty$.
As $W_0^{s,p(x,y)}(\Omega)\hookrightarrow L^{\bar p(x)}(\Omega)$ is  compact due to Lemma 2.3, $T_{k}(u_{n})\rightarrow v$  strongly in $L^{\bar p(x)}(\Omega)$. Passing to a subsequence, still denoted by $\{u_{n}\}_{n}$, we assume that
$$T_{k}(u_{n})\to v\quad \textup{a.e.\ in}\ \Omega.$$

From (\ref{14}), for any $k\geq 1$, we have
$$\int_{\Omega\times\Omega}\left|\frac{T_{k}(u_{n})(x)-T_{k}(u_{n})(y)}{k^{\frac{1}{p_{-}}}}\right|^{p(x,y)}d\nu\leq C.$$
As $[u]_{s,p(x,y)}$ is a norm on   $W_0^{s,p(x,y)}$, it follows from Lemmas \ref{L2.1} and \ref{L2.3}
that
$$\|k^{-\frac{1}{p_{-}}}T_{k}(u_{n})\|_{L^{\bar{p}(x)}(\Omega)}\leq C,$$
where  $C$ is  independent  of  $k$ and $n$. Then,
\begin{displaymath}
\begin{split}
\big|\{u_{n}\geq k\}\big|&=\big|\{T_{k}(u_{n})=k\}\big|\\
&\leq\int_{\Omega}\left|\frac{T_{k}(u_{n})}{k}\right|^{\bar p(x)}\,dx\\
&\leq k^{1-p_{-}}\int_{\Omega}\left|k^{-\frac{1}{p_{-}}}T_{k}(u_{n})\right|^{\bar p(x)}\,dx\leq Ck^{1-p_{-}},
\end{split}
\end{displaymath}
which implies
\begin{equation}\label{4}
\lim_{k\rightarrow\infty}\limsup_{n\rightarrow\infty}\big|\{u_{n}\geq k\}\big|=0.
 \end{equation}

For any $t>0$, we get
\begin{eqnarray}\label{5}
\begin{split}
\big|\{|u_{n}-u_{m}|>t\}\big|
\leq\big|\{u_{n}>k\}\big|+\big|\{u_{m}>k\}\big|
+\big|\{|T_{k}(u_{n})-T_{k}(u_{m})|>t\}\big|.
\end{split}
\end{eqnarray}
Note that
\begin{displaymath}
\begin{split}
&\big|\{|T_{k}(u_{n})-T_{k}(u_{m})|>t\}\big|\\
& \leq\int_{\{|T_{k}(u_{n})-T_{k}(u_{m})|>t\}}
\left|\frac{T_{k}(u_{n})-T_{k}(u_{m})}{t}\right|^{\bar p(x)}\,dx\\
& \leq(t^{-p_{-}}+t^{-p_{+}})\int_{\{|T_{k}(u_{n})-T_{k}(u_{m})|>t\}}
|T_{k}(u_{n})-T_{k}(u_{m})|^{\bar p(x)}\,dx,
\end{split}
\end{displaymath}
we have
\begin{equation}\label{6}
\lim_{m,n\rightarrow\infty}\big|\{|T_{k}(u_{n})-T_{k}(u_{m})|>t\}\big|=0.
\end{equation}
From (\ref{4})--(\ref{6}), then
$$\lim_{m,n\rightarrow\infty}\big|\{|u_{n}-u_{m}|>t\}\big|=0,$$
which implies that  $u_{n}\rightarrow u$  in  measure  and $u_{n}\rightarrow u$  a.e. in $\Omega$,  as  $n\rightarrow \infty$.

Then $v=T_{k}(u)\in W_0^{s,p(x,y)}(\Omega)$ and $T_{k}(u_{n})\rightarrow T_{k}(u)$  strongly in   $L^{\bar p(x)}(\Omega)$. As $\{u_{n}\}_{n}$ is increasing, we have $u_{n}(x)\leq u(x)$ a.e.  in  $\Omega$.
\end{proof}

\begin{lemma}\label{L3}
 For any $k>0$,
$T_{k}(u_{n})\rightarrow T_{k}(u)$ strongly in $W_0^{s,p(x,y)}(\Omega)$ as $n\rightarrow\infty$.
\end{lemma}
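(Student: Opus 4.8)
The plan is to test the difference of the equations satisfied by $u_n$ and $u_m$ against the difference of the truncations $T_k(u_n) - T_k(u_m)$, and to exploit the monotonicity structure of the operator together with the $L^1$ bound on the right-hand side. Concretely, I would subtract the weak formulations from Lemma \ref{L1} for indices $n$ and $m$, using the admissible test function $v = T_k(u_n) - T_k(u_m) \in W_0^{s,p(x,y)}(\Omega)$, to obtain
$$\int_{\Omega\times\Omega}\bigl(U_n(x,y)-U_m(x,y)\bigr)\bigl[(T_k(u_n)-T_k(u_m))(x)-(T_k(u_n)-T_k(u_m))(y)\bigr]\,d\nu = \int_\Omega (f_n-f_m)(T_k(u_n)-T_k(u_m))\,dx.$$
The right-hand side is bounded in absolute value by $2k\int_\Omega|f_n-f_m|\,dx \le 2k(\|f_n-f\|_{L^1}+\|f_m-f\|_{L^1})\to 0$ as $m,n\to\infty$ (and in any case stays bounded by $Ck$).

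The crux is the lower bound for the left-hand side. I would split $\Omega\times\Omega$ according to the signs and magnitudes of $u_n(x), u_n(y), u_m(x), u_m(y)$. On the set where both arguments of $u_n$ and both arguments of $u_m$ lie in $[-k,k]$, the truncations act as the identity and the integrand equals $\bigl(U_n(x,y)-U_m(x,y)\bigr)\bigl[(u_n(x)-u_n(y))-(u_m(x)-u_m(y))\bigr] \ge 0$ by the elementary monotonicity inequality $(|a|^{p-2}a - |b|^{p-2}b)(a-b)\ge 0$ valid for all $p>1$. On the remaining region I need to show the contribution is bounded below by a quantity tending to zero; here one uses that on such sets at least one of $u_n, u_m$ exceeds $k$ at $x$ or $y$, the estimate \eqref{4} on the measure of $\{u_n\ge k\}$, the almost everywhere convergence $u_n\to u$ from Lemma \ref{L2}, and the uniform bound \eqref{14} on $\int|T_k(u_n)(x)-T_k(u_n)(y)|^{p(x,y)}\,d\nu$. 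Combining these, one concludes
$$\limsup_{m,n\to\infty}\int_{\{|u_n|\le k,|u_m|\le k\}\cap\cdots}\bigl(U_n-U_m\bigr)\bigl[(u_n(x)-u_n(y))-(u_m(x)-u_m(y))\bigr]\,d\nu = 0,$$
i.e. the full truncation difference, which is the quantity $\int_{\Omega\times\Omega}|(T_k(u_n)-T_k(u_m))(x)-(T_k(u_n)-T_k(u_m))(y)|^{p(x,y)}\,d\nu$ up to the monotonicity gap, converges to zero.

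From this I would deduce that $\{T_k(u_n)\}_n$ is Cauchy in the seminorm $[\cdot]_{s,p(x,y)}$: indeed, a Clarkson-type / monotonicity argument converts the vanishing of the ``mixed'' integral into the vanishing of $[T_k(u_n)-T_k(u_m)]_{s,p(x,y)}$, using Lemma \ref{L2.1} to pass between the modular and the seminorm. Since $[\cdot]_{s,p(x,y)}$ is a norm on $W_0^{s,p(x,y)}(\Omega)$ equivalent to the full norm (Remark 2.2(2)), the sequence converges strongly there, and the limit must be $T_k(u)$ by the a.e. convergence already established in Lemma \ref{L2}. I expect the main obstacle to be the careful bookkeeping on the ``bad'' region $\{(x,y): \text{not both }u_n, u_m \text{ at }x,y \text{ in }[-k,k]\}$: one must extract from \eqref{4}, \eqref{14} and dominated convergence that the error there is negligible uniformly, which is where the nonlocal nature of the operator genuinely complicates matters compared with the local $p(x)$-Laplacian case, since the interaction terms couple far-apart points.
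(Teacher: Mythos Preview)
Your Cauchy-sequence strategy is not the route the paper takes, and the obstacle you flag on the ``bad'' region is real and not obviously surmountable along your lines. On that region at least one of $u_n(x),u_n(y)$ exceeds $k$, so $U_n(x,y)$ involves the \emph{untruncated} $u_n$ and is not controlled by the modular bound \eqref{14} on $T_k(u_n)$; the measure estimate \eqref{4} alone does not compensate, because the integrand $|u_n(x)-u_n(y)|^{p(x,y)-1}$ blows up with no matching smallness. A second gap is the passage from the vanishing of the mixed integral to the vanishing of the seminorm: the inequality $(|a|^{p-2}a-|b|^{p-2}b)(a-b)\ge c|a-b|^{p}$ fails for $p<2$, and since $p(x,y)$ is variable with $p_-$ possibly below $2$, a pointwise monotonicity lower bound is unavailable.

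The paper sidesteps both issues by testing \eqref{2} with $T_k(u_n)-T_k(u)$ (not $T_k(u_n)-T_k(u_m)$), and then \emph{dropping} the bad contribution by sign rather than estimating it. Writing $T_{n,k}(x,y)=|T_k(u_n)(x)-T_k(u_n)(y)|^{p-2}(T_k(u_n)(x)-T_k(u_n)(y))$, one checks on each of the four regions $A_1,\dots,A_4$ (split by whether $u_n(x),u_n(y)$ lie in $[0,k]$ or $[k,\infty)$) that
\[
\bigl(U_n(x,y)-T_{n,k}(x,y)\bigr)\bigl[(T_k(u_n)-T_k(u))(x)-(T_k(u_n)-T_k(u))(y)\bigr]\ge 0,
\]
where the increasing property $u_n\le u$ from Lemma \ref{L2} is what fixes the sign. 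After Young's inequality on the remaining piece of $I_{2,n}$, this yields
\[
\int_{\Omega\times\Omega}\frac{|T_k(u_n)(x)-T_k(u_n)(y)|^{p(x,y)}}{p(x,y)}\,d\nu\le\int_{\Omega\times\Omega}\frac{|T_k(u)(x)-T_k(u)(y)|^{p(x,y)}}{p(x,y)}\,d\nu+o(1).
\]
Combined with Fatou's lemma for the reverse inequality, the weighted modulars converge; a second Fatou/Brezis--Lieb argument then gives $\int|[T_k(u_n)-T_k(u)](x)-[T_k(u_n)-T_k(u)](y)|^{p(x,y)}\,d\nu\to 0$, hence strong convergence in $W_0^{s,p(x,y)}(\Omega)$ via Lemma \ref{L2.1}. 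The point is that the paper never needs a quantitative lower monotonicity bound nor a smallness estimate on the bad region: the sign observation and modular convergence do all the work.
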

\begin{proof}
Taking $T_{k}(u_{n})-T_{k}(u)$   as  a  test  function in (\ref{2}) to yield that
\begin{equation}\label{11}
\langle\mathcal{L}u_{n},T_{k}(u_{n})-T_{k}(u)\rangle=\int_{\Omega}f_{n}(T_{k}(u_{n})-T_{k}(u))\,dx.
\end{equation}
Denote $$ I_{1,n}=\int_{\Omega\times\Omega}U_{n}(x,y)
[T_{k}(u_{n})(x)-T_{k}(u_{n})(y)]\,d\nu$$
and
$$I_{2,n}=\int_{\Omega\times\Omega}U_{n}(x,y)
[T_{k}(u)(x)-T_{k}(u)(y)]\,d\nu.$$
From (\ref{11}), we have
\begin{equation}\label{10}
I_{1,n}=I_{2,n}+\int_{\Omega}f_{n}(T_{k}(u_{n})-T_{k}(u))\,dx.
\end{equation}

Denote $$ T_{n,k}(x,y)=|T_{k}(u_{n})(x)-T_{k}(u_{n})(x)|^{p(x,y)-2}[T_{k}(u_{n})(x)-T_{k}(u_{n})(x)].$$
Then
\begin{displaymath}
\begin{split}
I_{1,n}=&\int_{\Omega\times\Omega}|T_{k}(u_{n})(x)-T_{k}(u_{n})(y)|^{p(x,y)}\,d\nu\\
&+\int_{\Omega\times\Omega}(U_{n}(x,y)-T_{n,k}(x,y))[T_{k}(u_{n})(x)-T_{k}(u_{n})(y)]\,d\nu
\end{split}
\end{displaymath}
and
\begin{displaymath}
\begin{split}
I_{2,n}=&\int_{\Omega\times\Omega}T_{n,k}(x,y)[T_{k}(u)(x)-T_{k}(u)(y)]\,d\nu\\
&+\int_{\Omega\times\Omega}(U_{n}(x,y)-T_{n,k}(x,y))[T_{k}(u)(x)-T_{k}(u)(y)]\,d\nu\\
\leq&\int_{\Omega\times\Omega}\frac{1}{p(x,y)}|T_{k}(u)(x)-T_{k}(u)(y)|^{p(x,y)}\,d\nu\\
&+\int_{\Omega\times\Omega}\frac{p(x,y)-1}{p(x,y)}|T_{n,k}(x,y)|^{\frac{p(x,y)}{p(x,y)-1}}\,d\nu\\
&+\int_{\Omega\times\Omega}(U_{n}(x,y)-T_{n,k}(x,y))[T_{k}(u)(x)-T_{k}(u)(y)]\,d\nu.\\
\end{split}
\end{displaymath}
From (\ref{10}), we have
\begin{displaymath}
\begin{split}
&\int_{\Omega\times\Omega}
\frac{|T_{k}(u_{n})(x)-T_{k}(u_{n})(y)|^{p(x,y)}}{p(x,y)}\,d\nu\\
&\quad+\int_{\Omega\times\Omega}(U_{n}(x,y)-T_{n,k}(x,y))[T_{k}(u_{n})(x)-T_{k}(u)(x)-T_{k}(u_{n})(y)+T_{k}(u)(y)]\,d\nu\\
&\leq\int_{\Omega\times\Omega}
\frac{|T_{k}(u)(x)-T_{k}(u)(y)|^{p(x,y)}}{p(x,y)}d\nu+\int_{\Omega}f_{n}(T_{k}(u_{n})-T_{k}(u))\,dx.
\end{split}
\end{displaymath}

 In the following, we will verify that the second term on the left-hand side of the above inequality  is nonnegative. We divide $\Omega\times\Omega$ into the following four parts:
$$A_{1}=\{(x,y)\in\Omega\times\Omega:u_{n}(x)\leq k,u_{n}(y)\leq k\},$$
$$A_{2}=\{(x,y)\in\Omega\times\Omega:u_{n}(x)\geq k,u_{n}(y)\geq k\},$$
$$A_{3}=\{(x,y)\in\Omega\times\Omega:u_{n}(x)\leq k,u_{n}(y)\geq k\},$$
$$A_{4}=\{(x,y)\in\Omega\times\Omega:u_{n}(x)\geq k,u_{n}(y)\leq k\}.$$
Similar to the proof of Lemma 3.6 in \cite{AAB}, we could verifty that
$$(U_{n}(x,y)-T_{n,k}(x,y))[T_{k}(u_{n})(x)-T_{k}(u)(x)-T_{k}(u_{n})(y)+T_{k}(u)(y)]\geq0$$
 a.e. in $A_{1}\cup A_{2}\cup A_{3}\cup A_{4}.$
Then
$$\int_{\Omega\times\Omega}(U_{n}(x,y)-T_{n,k}(x,y))
[T_{k}(u_{n})(x)-T_{k}(u)(x)-T_{k}(u_{n})(y)+T_{k}(u)(y)]\,d\nu\geq0,$$
which implies
\begin{eqnarray}\label{13}
\begin{split}
&\int_{\Omega\times\Omega}
\frac{|T_{k}(u_{n})(x)-T_{k}(u_{n})(y)|^{p(x,y)}}{p(x,y)}\,d\nu\\
&\leq\int_{\Omega\times\Omega}\frac{|T_{k}(u)(x)-T_{k}(u)(y)|^{p(x,y)}}{p(x,y)}\,d\nu
+\int_{\Omega}f_{n}(T_{k}(u_{n})-T_{k}(u))\,dx.
\end{split}
\end{eqnarray}

As  $T_{k}(u_{n})\rightarrow T_{k}(u)$ strongly in $L^{\bar{p}(x)}(\Omega)$, we derive that as $n\rightarrow\infty$,
$$\int_{\Omega}f_{n}(T_{k}(u_{n})-T_{k}(u))\,dx\rightarrow 0.$$
It follows from  Fatou lemma and (\ref{13}) that
\begin{displaymath}
\begin{split}
&\int_{\Omega\times\Omega}
\frac{|T_{k}(u)(x)-T_{k}(u)(y)|^{p(x,y)}}{p(x,y)}\,d\nu\\
&=\int_{\Omega\times\Omega}\liminf_{n\rightarrow\infty}
\frac{|T_{k}(u_{n})(x)-T_{k}(u_{n})(y)|^{p(x,y)}}{p(x,y)}\,d\nu\\
&\leq\liminf_{n\rightarrow\infty}\int_{\Omega\times\Omega}
\frac{|T_{k}(u_{n})(x)-T_{k}(u_{n})(y)|^{p(x,y)}}{p(x,y)}\,d\nu\\
&\leq\limsup_{n\to \infty}\int_{\Omega\times\Omega}
\frac{|T_{k}(u_{n})(x)-T_{k}(u_{n})(y)|^{p(x,y)}}{p(x,y)}\,d\nu\\
&\leq\int_{\Omega\times\Omega}
\frac{|T_{k}(u)(x)-T_{k}(u)(y)|^{p(x,y)}}{p(x,y)}\,d\nu,
\end{split}
\end{displaymath}
which yields
\begin{displaymath}
\begin{split}
\lim_{n\to \infty}\int_{\Omega\times\Omega}
\frac{|T_{k}(u)(x)-T_{k}(u)(y)|^{p(x,y)}}{p(x,y)}\,d\nu
=\int_{\Omega\times\Omega}
\frac{|T_{k}(u)(x)-T_{k}(u)(y)|^{p(x,y)}}{p(x,y)}\,d\nu.
\end{split}
\end{displaymath}
Note that
\begin{displaymath}
\begin{split}
&\big|[T_{k}(u_{n})(x)-T_{k}(u_{n})(y)]-[T_{k}(u)(x)-T_{k}(u)(y)]\big|^{p(x,y)}\\
&\leq2^{p_{+}}\left(|T_{k}(u_{n})(x)-T_{k}(u_{n})(y)|^{p(x,y)}+|T_{k}(u)(x)-T_{k}(u)(y)|^{p(x,y)}\right),
\end{split}
\end{displaymath}
then by Fatou lemma, we have
\begin{displaymath}
\begin{split}
\int_{\Omega\times\Omega}&\frac{2^{p_+ +1}|T_{k}(u)(x)-T_{k}(u)(y)|^{p(x,y)}}{p(x,y)}\,d\nu\\
=\int_{\Omega\times\Omega}&\frac{1}{p(x,y)}\liminf_{n\rightarrow\infty}\Big(2^{p_+}\big|T_{k}(u_n)(x)-T_{k}(u_n)(y)\big|^{p(x,y)}
+2^{p_+}\big|T_{k}(u)(x)-T_{k}(u)(y)\big|^{p(x,y)}\\
&-\left|[T_{k}(u_{n})(x)-T_{k}(u_{n})(y)]-[T_{k}(u)(x)-T_{k}(u)(y)]\right|^{p(x,y)}\Big)\,d\nu\\
\leq\liminf_{n\rightarrow\infty}&\int_{\Omega\times\Omega}\frac{1}{p(x,y)}\Big(2^{p_+}\big|T_{k}(u_n)(x)-T_{k}(u_n)(y)\big|^{p(x,y)}
+2^{p_+}\big|T_{k}(u)(x)-T_{k}(u)(y)\big|^{p(x,y)}\\
&-\big|[T_{k}(u_{n})(x)-T_{k}(u_{n})(y)]-[T_{k}(u)(x)-T_{k}(u)(y)]\big|^{p(x,y)}\Big)\,d\nu\\
=\int_{\Omega\times\Omega}&\frac{2^{p_+ +1}\big|T_{k}(u)(x)-T_{k}(u)(y)\big|^{p(x,y)}}{p(x,y)}\,d\nu\\
&-\limsup_{n\rightarrow\infty}\int_{\Omega\times\Omega}\frac{\big|[T_{k}(u_{n})(x)-T_{k}(u_{n})(y)]-[T_{k}(u)(x)-T_{k}(u)(y)]\big|^{p(x,y)}}{p(x,y)}\,d\nu.
\end{split}
\end{displaymath}
Thus,
$$\int_{\Omega\times\Omega}\frac{\big|[T_{k}(u_{n})(x)-T_{k}(u_{n})(y)]-[T_{k}(u)(x)-T_{k}(u)(y)]\big|^{p(x,y)}}{p(x,y)}\,d\nu\rightarrow0.$$
As $[u]_{s,p(x,y)}$ is a norm on   $W_0^{s,p(x,y)}$, it follows from Lemma \ref{L2.1}
that
$T_{k}(u_{n})\rightarrow T_{k}(u)$ strongly in $W_0^{s,p(x,y)}(\Omega)$.
\end{proof}

\begin{theorem}
 The function $u$ obtained in Lemma \ref{L2} is  a  unique renormalized  solution  to  problem \textup{(\ref{1})}.
\end{theorem}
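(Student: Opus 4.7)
The plan is to verify conditions (i) and (ii) of Definition~\ref{def} for the limit $u$ produced in Lemma~\ref{L2}, and then to prove uniqueness by a monotonicity argument. \textbf{Verification of (i).} I would test (\ref{2}) with the Lipschitz function $\psi_h(t) := T_{h+1}(t) - T_h(t)$, which equals $1$ on $\{t \geq h+1\}$, $0$ on $\{t \leq h\}$, and is affine in between. Since $u_n \geq 0$, this yields
$$\int_{\Omega\times\Omega} U_n(x,y)\bigl[\psi_h(u_n)(x) - \psi_h(u_n)(y)\bigr]\,d\nu = \int_\Omega f_n \psi_h(u_n)\,dx \leq \int_{\{u > h\}} f\,dx.$$
The left-hand integrand is pointwise nonnegative, and on $R_h^{u_n} := \{(x,y):(u_n(x),u_n(y)) \in R_h\}$ it equals exactly $|u_n(x)-u_n(y)|^{p(x,y)-1}$ (by symmetry, whichever coordinate is $\geq h+1$ gives $\psi_h$-difference $\pm 1$ with matching sign to $U_n$). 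Applying Fatou's lemma in $n$ transfers the bound from $R_h^{u_n}$ to $R_h^u$, and the right-hand side vanishes as $h \to \infty$ by dominated convergence using $u < \infty$ a.e.\ and $f \in L^1$; this gives (i).

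\textbf{Verification of (ii).} Fix $\varphi \in C_0^\infty(\Omega)$ and $S \in W^{1,\infty}(\mathbb{R})$ with $\mathrm{supp}\,S \subset [-M,M]$; test (\ref{2}) with the admissible function $S(u_n)\varphi$. The right-hand side converges to $\int_\Omega f S(u)\varphi\,dx$ by dominated convergence. For the left-hand side, I would decompose $\Omega \times \Omega$ into $\{u_n(x),u_n(y)\leq M+1\}$, $\{u_n(x),u_n(y)>M+1\}$, and the mixed region. On the second set $S(u_n)$ vanishes at both coordinates so the integrand is identically zero; on the first set $u_n$ agrees with $T_{M+1}(u_n)$ and the strong convergence $T_{M+1}(u_n)\to T_{M+1}(u)$ in $W_0^{s,p(x,y)}(\Omega)$ from Lemma~\ref{L3}, combined with elementary pointwise inequalities for the $p(x,y)$-nonlinearity, lets one pass to the limit. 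The mixed region is the main obstacle: the integrand is not obviously dominated by an $L^1$ function, but I can argue that on this set it vanishes unless one of $u_n(x),u_n(y)$ is $\leq M$ (otherwise $S(u_n)=0$ at both coordinates), so the integrand is supported in $R_M^{u_n}$ and majorized by $C\|S\|_\infty\|\varphi\|_\infty|u_n(x)-u_n(y)|^{p(x,y)-1}/|x-y|^{N+sp(x,y)}$; the $L^1(d\nu)$-norm of this majorant is equi-bounded by the very estimate produced in (i). Together with the a.e.\ convergence of $u_n$ and of $\mathbf{1}_{R_M^{u_n}}$, Vitali's theorem delivers the limit, yielding (\ref{7}).

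\textbf{Uniqueness.} Let $u_1,u_2$ be two renormalized solutions. Using (i) together with density of $C_0^\infty(\Omega)$ in $W_0^{s,p(x,y)}(\Omega)$, I would first extend (\ref{7}) to admit $\varphi \in W_0^{s,p(x,y)}(\Omega) \cap L^\infty(\Omega)$. I then choose $\varphi = T_k(u_1-u_2)$ and $S = S_h$ a smooth cutoff with $S_h \equiv 1$ on $[-h,h]$ and $\mathrm{supp}\,S_h \subset [-h-1,h+1]$, write (\ref{7}) for $u_1$ and for $u_2$, subtract, and let $h \to \infty$. Property (i) applied to each $u_i$ kills the tail contributions where $S_h'\neq 0$, leaving
$$\int_{\Omega\times\Omega}\bigl(U^{(1)}(x,y)-U^{(2)}(x,y)\bigr)\bigl[T_k(u_1-u_2)(x)-T_k(u_1-u_2)(y)\bigr]\,d\nu = 0,$$
where $U^{(i)}(x,y) := |u_i(x)-u_i(y)|^{p(x,y)-2}(u_i(x)-u_i(y))$. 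Strict monotonicity of $t\mapsto|t|^{p-2}t$ renders the integrand pointwise nonnegative, so it must vanish a.e., forcing $T_k(u_1) = T_k(u_2)$ for every $k$ and hence $u_1 = u_2$.
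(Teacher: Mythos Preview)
Your overall strategy for existence mirrors the paper's closely: the test function $\psi_h = T_{h+1} - T_h$ in (i) is exactly the paper's $T_1(G_h(\cdot))$, and your three-region decomposition in (ii) is a coarser version of the paper's six-region split. However, two steps need repair.

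First, in (ii) your treatment of the mixed region does not go through as written. You claim that equi-boundedness of $\int_{R_M^{u_n}} |u_n(x)-u_n(y)|^{p(x,y)-1}\,d\nu$ together with a.e.\ convergence suffices for Vitali's theorem; but Vitali requires \emph{equi-integrability}, not merely a uniform $L^1$ bound. The paper handles this differently: the decomposition can be carried out for \emph{any} $M$ with $\mathrm{supp}\,S\subset[-M,M]$; after passing $n\to\infty$ on the ``good'' regions (where $u_n = T_{M+1}(u_n)$ and Lemma~\ref{L3} applies), the contribution of the ``bad'' region $R_M$ is bounded by $\int_{\{u>M\}} f$, and \emph{then} one sends $M\to\infty$. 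Without this second limit your argument is incomplete.

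Second, and more seriously, in the uniqueness proof you take $\varphi = T_k(u_1-u_2)$ after extending (\ref{7}) to $\varphi\in W_0^{s,p(x,y)}(\Omega)\cap L^\infty(\Omega)$; but you have not shown that $T_k(u_1-u_2)$ belongs to $W_0^{s,p(x,y)}(\Omega)$. You only know $T_j(u_i)\in W_0^{s,p(x,y)}(\Omega)$ for each $i,j$ separately; nothing controls the increments of $T_k(u_1-u_2)$ on regions where, say, $u_1$ is moderate but $u_2$ is unbounded. This is precisely why the paper uses instead $\varphi = T_k(S_\sigma(u_1)-S_\sigma(u_2))$ with $S_\sigma$ the bounded Lipschitz function in (\ref{3-19}): since $S_\sigma'$ has compact support, $S_\sigma(u_i)=S_\sigma(T_{\sigma+1}(u_i))\in W_0^{s,p(x,y)}(\Omega)$, so the truncated difference is genuinely admissible. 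Your scheme can be repaired along these lines, but as stated it has a real gap. Finally, your last sentence (``forcing $T_k(u_1)=T_k(u_2)$'') is too quick: vanishing of the nonnegative integrand only yields $(u_1-u_2)(x)=(u_1-u_2)(y)$ a.e.\ on the relevant set, and one must still argue, using the boundary condition encoded in $\mathcal{T}_0^{s,p(x,y)}(\Omega)$, that this constant is zero.
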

\begin{proof} (i) Existence of renormalized solutions.
We will divide the proof into the following two steps.

\textbf{Step 1.} We will verify that
$$\lim_{h\to \infty}\int_{\{(u(x),u(y))\in R_{h}\}}|u(x)-u(y)|^{p(x,y)-1}\,d\nu=0.$$
For any $h>0$, denote $G_{h}(t)=t-T_{h}(t)$.
Taking $T_{1}(G_{h}(u_{n}))$  as  a  test  function  in  (\ref{2}), we have
\begin{displaymath}
\begin{split}
&\int_{\Omega\times\Omega}U_{n}(x,y)
[T_{1}(G_{h}(u_{n}))(x)-T_{1}(G_{h}(u_{n}))(y)]\,d\nu\\
&=\int_{\Omega}f_{n}T_{1}(G_{h}(u_{n}))\,dx\\
&\leq\int_{\{u_{n}>h\}}f_{n}\,dx\leq\int_{\{u_{n}>h\}}f\,dx.
\end{split}
\end{displaymath}
If $(u_{n}(x),u_{n}(y))\in R_{h}$,
\begin{align*}
&|u_{n}(x)-u_{n}(y)|^{p(x,y)-1}\leq U_{n}(x,y)
[T_{1}(G_{n}(u_{n}))(x)-T_{1}(G_{n}(u_{n}))(y)].
\end{align*}
Then
\begin{equation}\label{8}
\int_{\{(u_n(x),u_n(y))\in R_{h}\}}|u_{n}(x)-u_{n}(y)|^{p(x,y)-1}\,d\nu
\leq\int_{\{u_{n}>h\}}f\,dx.
\end{equation}
By Fatou lemma,
\begin{displaymath}
\begin{split}
&\int_{\Omega\times\Omega}|u(x)-u(y)|^{p(x,y)-1}
\chi_{\{(u(x),u(y))\in R_{h}\}}\,d\nu\\
&=\int_{\Omega\times\Omega}\liminf_{n\to \infty}|u_{n}(x)-u_{n}(y)|^{p(x,y)-1}
\chi_{\{(u_n(x),u_n(y))\in R_{h}\}}\,d\nu\\
&\leq\liminf_{n\to \infty}\int_{\Omega\times\Omega}|u_{n}(x)-u_{n}(y)|^{p(x,y)-1}
\chi_{\{(u_n(x),u_n(y))\in R_{h}\}}\,d\nu\\
&\leq\liminf_{n\to \infty}\int_{\{u_{n}>h\}}f\,dx.
\end{split}
\end{displaymath}
As  $f\in L^{1}(\Omega)$, by (\ref{4}) we obtain that
\begin{displaymath}
\begin{split}
&\lim_{h\to \infty}\int_{\Omega\times\Omega}|u(x)-u(y)|^{p(x,y)-1}
\chi_{\{((u(x),u(y))\in R_{h}\}}\,d\nu\\
&\leq\lim_{h\to \infty}\lim_{n\to \infty}\int_{\{u_{n}>h\}}f\,dx=0.
\end{split}
\end{displaymath}

\textbf{Step 2.}   For any $\varphi\in C_{0}^{\infty}(\Omega)$ and $S\in W^{1,\infty}(\mathbb{R})$  with compact support,
we will verify that $u$ satisfies (\ref{7}).

Taking $S(u_{n})\varphi$ as a test function in (\ref{2}), we have
\begin{eqnarray}\label{9}
\begin{split}
&\int_{\Omega\times\Omega}U_{n}(x,y)[(S(u_{n})\varphi)(x)-(S(u_{n})\varphi)(y)]\,d\nu=\int_{\Omega}f_{n} S(u_{n})\varphi\,dx.
\end{split}
\end{eqnarray}
Note that
\begin{displaymath}
\begin{split}
&\int_{\Omega\times\Omega}U_{n}(x,y)[(S(u_{n})\varphi)(x)-(S(u_{n})\varphi)(y)]\,d\nu\\
=&\int_{\Omega\times\Omega}U_{n}(x,y)
[S(u_{n})(x)-S(u_{n})(y)]\cdot\frac{\varphi(x)+\varphi(y)}{2}\,d\nu\\
&+\int_{\Omega\times\Omega}U_{n}(x,y)
(\varphi(x)-\varphi(y))\cdot\frac{S(u_{n})(x)+S(u_{n})(y)}{2}\,d\nu\\
:=& I_{1}+I_{2}.
\end{split}
\end{displaymath}

In the following, we assume that ${\rm supp}\,S\subset[-M,M]$, where $M>0$ and define the following subdomains of $\Omega\times\Omega$:
\begin{align*}
&B_{1,n}=\{(x,y)\in\Omega\times\Omega: u_{n}(x)\geq M,u_{n}(y)\geq M\},\\
&B_{2,n}=\{(x,y)\in\Omega\times\Omega: u_{n}(x)\leq M,u_{n}(y)\leq M\},\\
&B_{3,n}=\{(x,y)\in\Omega\times\Omega: M\leq u_{n}(x)\leq M+1,u_{n}(y)\leq M\},\\
&B_{4,n}=\{(x,y)\in\Omega\times\Omega: u_{n}(x)\geq M+1,u_{n}(y)\leq M\},\\
&B_{5,n}=\{(x,y)\in\Omega\times\Omega: u_{n}(x)\leq M, M\leq u_{n}(y)\leq M+1\},\\
&B_{6,n}=\{(x,y)\in\Omega\times\Omega: u_{n}(x)\leq M, u_{n}(y)\geq M+1\}.
\end{align*}
 First, we  will  estimate $I_{1}$ and denote
\begin{align*}
&G_n(x,y)=\frac{U_{n}(x,y)
[S(u_{n})(x)-S(u_{n})(y)]}
{|x-y|^{N+sp(x,y)}}\cdot\frac{\varphi(x)+\varphi(y)}{2},\\
&G(x,y)=\frac{U(x,y)
[S(u)(x)-S(u)(y)]}
{|x-y|^{N+sp(x,y)}}\cdot\frac{\varphi(x)+\varphi(y)}{2},
\end{align*}
where
$$
U(x,y)=|u(x)-u(y)|^{p(x,y)-2}(u(x)-u(y)).
$$

(1) In   $B_{1,n}$, $S(u_{n})(x)=S(u_{n})(y)=0$. Then, $G_n(x,y)=0$.

(2) In
$B_{2,n}$, $T_{M}(u_{n})(x)=u_{n}(x)$ and $T_{M}(u_{n})(y)=u_{n}(y).$
Then
\begin{displaymath}
\begin{split}
&U_{n}(x,y)
[S(u_{n})(x)-S(u_{n})(y)]\\
&=|T_{M}(u_{n})(x)-T_{M}(u_{n})(y)|^{p(x,y)-2}[T_{M}(u_{n})(x)-T_{M}(u_{n})(y)]\\
&\quad\cdot [S(T_{M}(u_{n}))(x)-S(T_{M}(u_{n}))(y)].
\end{split}
\end{displaymath}
Note that
$$
S(T_{M}(u_{n}))(x)-S(T_{M}(u_{n}))(y)
=S'(\xi)[T_{M}(u_{n})(x)-T_{M}(u_{n})(y)],
$$
where $\xi$ is between $T_{M}(u_{n})(x)$ and $T_{M}(u_{n})(y)$.
We could verify that
$$\left\{\frac{
[S(T_{M}(u_{n}))(x)-S(T_{M}(u_{n}))(y)]}
{|x-y|^{\frac{N+sp(x,y)}{p(x,y)}}}\cdot\frac{\varphi(x)+\varphi(y)}{2}\cdot
\chi_{B_{2,n}}\right\}_n$$
 is bounded in $\ L^{p(x,y)}(\Omega\times\Omega)$. Besides, it follows from Lemma \ref{L3} that
\begin{displaymath}
\begin{split}
&\frac{|T_{M}(u_{n})(x)-T_{M}(u_{n})(y)|^{p(x,y)-2}[T_{M}(u_{n})(x)-T_{M}(u_{n})(y)]}
{|x-y|^{(N+sp(x,y))\frac{p(x,y)-1}{p(x,y)}}}\\
&\quad \rightarrow\frac{|T_{M}(u)(x)-T_{M}(u)(y)|^{p(x,y)-2}[T_{M}(u)(x)-T_{M}(u)(y)]}
{|x-y|^{(N+sp(x,y))\frac{p(x,y)-1}{p(x,y)}}}\\
&\textmd{strongly in } L^{\frac{p(x,y)}{p(x,y)-1}}(\Omega\times\Omega).
\end{split}
\end{displaymath}
Then, we obtain
\begin{displaymath}
\begin{split}
\int_{B_{2,n}}&G_n(x,y)\,dxdy\\
=\int_{\Omega\times\Omega}& |T_{M}(u_n)(x)-T_{M}(u_n)(y)|^{p(x,y)-2}[T_{M}(u_n)(x)-T_{M}(u_n)(y)]
\\
&\cdot[S(u_n)(x)-S(u_n)(y)]\frac{\varphi(x)+\varphi(y)}{2}\chi_{B_{2,n}}\,d\nu\\
\rightarrow&\int_{\Omega\times\Omega} |T_{M}(u)(x)-T_{M}(u)(y)|^{p(x,y)-2}[T_{M}(u)(x)-T_{M}(u)(y)]
\\
&\cdot[S(u)(x)-S(u)(y)]\frac{\varphi(x)+\varphi(y)}{2}\chi_{\{u(x)\leq M,\ u(y)\leq M\}}\,d\nu.
\end{split}
\end{displaymath}

(3) In $B_{3,n}$, similar to the discussion of (2), we verify that
$$\lim_{n\to \infty}\int_{B_{3,n}}G_n(x,y)\,dxdy=\int_{\{M\leq u(x)\leq M+1,u(y)\leq M\}}G(x,y)\,dxdy.$$

(4) In $B_{4,n}$, we have
$$\max\{u_{n}(x),u_{n}(y)\}\geq M+1\ \textup{and}\ \min\{u_{n}(x),u_{n}(y)\}\leq M,$$
 which implies
$(u_{n}(x),u_{n}(y))\in R_{M}$. By (\ref{8}), we conclude that
$$\lim_{M\to \infty}\lim_{n\to \infty}\int_{B_{4,n}}
|u_{n}(x)-u_{n}(y)|^{p(x,y)-1}\,d\nu=0.$$
Thus
$$
\lim_{M\to \infty}\lim_{n\to \infty}\int_{B_{4,n}}
G_n(x,y)\,dxdy=0.
$$

Since
$$\int_{B_{3,n}}G_n(x,y)\,dxdy=\int_{B_{5,n}}G_n(x,y)\,dxdy$$
and $$\int_{B_{4,n}}G_n(x,y)\,dxdy=\int_{B_{6,n}}G_n(x,y)\,dxdy,$$
we have
$$I_{1}=\left(\int_{B_{1,n}}+\int_{B_{2,n}}
+2\int_{B_{3,n}}+2\int_{B_{4,n}}\right)G_n(x,y)\,dxdy.$$

It follows from  (1)--(4) that
\begin{displaymath}
\begin{split}
\lim_{n\to \infty}I_{1}=&\lim_{M\to \infty}\lim_{n\to \infty}I_{1}\\
=&\lim_{M\to \infty}\int_{\{u(x)\leq M,u(y)\leq M\}}G(x,y)\,dxdy\\
&+2\lim_{M\to \infty}\int_{\{M\leq u(x)\leq M+1,u(y)\leq M\}}G(x,y)\,dxdy\\
&+\lim_{M\to \infty}2\lim_{n\to \infty}\int_{B_{4,n}}G_n(x,y)\,dxdy\\
=&\int_{\Omega\times\Omega}G(x,y)\,dxdy.
\end{split}
\end{displaymath}
Similarly, we could verify that
$$I_{2}\rightarrow\int_{\Omega\times\Omega}U(x,y)
(\varphi(x)-\varphi(y))\cdot\frac{S(u)(x)+S(u)(y)}{2}\,d\nu.$$
Besides,
$$\int_{\Omega}f_{n} S(u_{n})\varphi \,dx\rightarrow \int_{\Omega}f S(u) \varphi\,dx.$$
Thus by (\ref{9}), we find
$$\int_{\Omega\times\Omega}\frac{|u(x)-u(y)|^{p(x,y)-2}(u(x)-u(y))}
{|x-y|^{N+sp(x,y)}}[(S(u)\varphi)(x)-(S(u)\varphi)(y)]\,dxdy=\int_{\Omega}f S(u) \varphi \,dx.$$
Combining with Step 1  and Step 2, we verify that $u$ is a renormalized solution to (\ref{1}).

 (ii) Uniqueness of renormalized solutions.

Now we  prove the uniqueness of renormalized solutions to problem
(\ref{1}) by choosing an appropriate test function motivated by
\cite {BW,BMR,ZZ}. Let $u$ and $v$ be two renormalized
solutions to problem (\ref{1}). Fix a positive number $k$. For
$\sigma>0$, let $S_\sigma$ be the function defined by
\begin{eqnarray}\label{3-19}
\left\{
\begin{array}{ll}
 \displaystyle
S_\sigma(r)=r &\hbox{ if } |r|< \sigma,\\
\displaystyle S_\sigma(r)=(\sigma+\frac 12)\mp\frac
12(r\mp(\sigma+1))^2 &\hbox{ if } \sigma\le
\pm r\le \sigma+1,\\
\displaystyle S_\sigma(r)=\pm(\sigma+\frac 12) &\hbox{ if } \pm
r>\sigma+1.
\end{array}
\right.
\end{eqnarray}
It is obvious that
\begin{eqnarray*}
\left\{
\begin{array}{ll}
\displaystyle
S'_\sigma(r)=1 &\hbox{ if } |r|< \sigma,\\[2mm]
S'_\sigma(r)=\sigma+1-|r| &\hbox{ if } \sigma\le
|r|\le \sigma+1,\\[2mm]
S'_\sigma(r)=0 &\hbox{ if } |r|>\sigma+1.
\end{array}
\right.
\end{eqnarray*}

It is easy to check  $S_\sigma\in W^{1,\infty}(\mathbb R)$ with
${\rm supp}\,S'_\sigma\subset [-\sigma-1,\sigma+1]$.
Therefore,  we may take $S=S_\sigma$ in (\ref{7}) to have
\begin{eqnarray*}
&&\int_{\Omega\times \Omega}U(x,y)(\varphi(x)-\varphi(y))\cdot\frac{S_\sigma'(u)(x)+S_\sigma'(u)(y)}{2}\,d\nu \\
&&\quad+\int_{\Omega\times \Omega}U(x,y)(S_\sigma'(u)(x)-S_\sigma'(u)(y))\cdot\frac{\varphi(x)+\varphi(y)}{2}\,d\nu\\
&&=\int_\Omega f S_\sigma'(u)\varphi\,dx
\end{eqnarray*}
and
\begin{eqnarray*}
&&\int_{\Omega\times \Omega}V(x,y)(\varphi(x)-\varphi(y))\cdot\frac{S_\sigma'(v)(x)+S_\sigma'(v)(y)}{2}\,d\nu \\
&&\quad+\int_{\Omega\times \Omega}V(x,y)(S_\sigma'(v)(x)-S_\sigma'(v)(y))\cdot\frac{\varphi(x)+\varphi(y)}{2}\,d\nu\\
&&=\int_\Omega f S_\sigma'(v)\varphi\,dx,
\end{eqnarray*}
where
$$
V(x,y)=|v(x)-v(y)|^{p(x,y)-2}(v(x)-v(y)).
$$
For every fixed $k>0$, we plug $\varphi =T_k(S_\sigma(u)-S_\sigma(v))$ as a test function
in the above equalities and subtract them to obtain that
\begin{equation}
\label{3-20} J_1+J_2=J_3,
\end{equation}
where
\begin{eqnarray*}
&&J_1=\int_{\Omega\times \Omega}\left[\frac{S_\sigma'(u)(x)+S_\sigma'(u)(y)}{2}U(x,y)-\frac{S_\sigma'(v)(x)+S_\sigma'(v)(y)}{2}V(x,y)\right]\\[2mm]
&&\qquad\qquad\qquad\cdot [T_k(S_\sigma(u)-S_\sigma(v))(x)-T_k(S_\sigma(u)-S_\sigma(v))(y)]\,d\nu,\\[2mm]
&&J_2=\int_{\Omega\times \Omega}\left[U(x,y)(S_\sigma'(u)(x)-S_\sigma'(u)(y))-V(x,y)(S_\sigma'(v)(x)-S_\sigma'(v)(y))\right]\\[2mm]
&&\qquad\qquad\qquad\cdot \frac{T_k(S_\sigma(u)-S_\sigma(v))(x)+T_k(S_\sigma(u)-S_\sigma(v))(y)}{2}\,d\nu,\\[2mm]
&&J_3=\int_\Omega
f(S'_\sigma(u)-S'_\sigma(v))T_k(S_\sigma(u)-S_\sigma(v))\,dx.
\end{eqnarray*}

We estimate $J_1$, $J_2$ and $J_3$ one by one. Writing
\begin{eqnarray*}
&&J_1=\int_{\Omega\times \Omega} (U(x,y)-V(x,y))\cdot[T_k(S_\sigma(u)-S_\sigma(v))(x)-T_k(S_\sigma(u)-S_\sigma(v))(y)]\,d\nu \\[2mm]
&&\qquad+\int_{\Omega\times \Omega} \left(1-\frac{S_\sigma'(u)(x)+S_\sigma'(u)(y)}{2}\right)U(x,y)\\[2mm]
&&\qquad\qquad\cdot[T_k(S_\sigma(u)-S_\sigma(v))(x)-T_k(S_\sigma(u)-S_\sigma(v))(y)]\,d\nu \\[2mm]
&&\qquad+\int_{\Omega\times \Omega} \left(\frac{S_\sigma'(v)(x)+S_\sigma'(v)(y)}{2}-1\right)V(x,y)\\[2mm]
&&\qquad\qquad \cdot[T_k(S_\sigma(u)-S_\sigma(v))(x)-T_k(S_\sigma(u)-S_\sigma(v))(y)]\,d\nu \\[2mm]
&&\quad:=J^1_1+J^2_1+J^3_1,
\end{eqnarray*}
and setting  $\sigma\ge k$, we have
\begin{eqnarray}\label{J11}
&&J^1_1\ge \int_{\{|u-v|\le k\}\cap\{|u|,|v|\le k\}}(U(x,y)-V(x,y))\nonumber \\[2mm]
&&\qquad\qquad\cdot[(u(x)-v(x))-(u(y)-v(y))]\,d\nu.
\end{eqnarray}
By the Lebesgue dominated convergence theorem, we conclude that
$$
J_1^2, J_1^3\to 0, \quad\textmd{as } \sigma\to +\infty.
$$
Furthermore, we have
\begin{align*}
|J_2| &\le  C\Big(\int_{\{(u(x), u(y))\in R_\sigma\}} |u(x)-u(y)|^{p(x,y)-1}\,d\nu\\
&\quad+\int_{\{(v(x), v(y))\in R_\sigma\}}|v(x)-v(y)|^{p(x,y)-1}\,d\nu\Big).
\end{align*}
From the above estimates and (i) in Definition 1.1, we obtain
$$
\lim\limits_{\sigma\to +\infty} (|J^2_1|+|J^3_1|+|J_2|)=0.
$$

Observing
\begin{eqnarray*}
f(S'_\sigma(u)-S'_\sigma(v))\to 0 \quad\mbox{strongly in } L^1(\Omega)
\end{eqnarray*}
as $\sigma\to +\infty$ and using the Lebesgue dominated convergence
theorem, we deduce that
$$
\lim\limits_{\sigma\to +\infty}|J_3|=0.
$$

Therefore, sending $\sigma\to +\infty$ in (\ref{3-20}) and recalling
(\ref{J11}), we have
\begin{eqnarray*}
\int_{\{|u|\le \frac k2,|v|\le \frac k2\}}(U(x,y)-V(x,y))
\cdot[(u(x)-v(x))-(u(y)-v(y))]\,d\nu=0,
\end{eqnarray*}
which implies $u=v$ a.e. on the set $\big\{|u|\le
\frac k2,|v|\le \frac k2\big\}$. Since $k$ is arbitrary, we conclude
that $u=v$ a.e. in $\Omega_T$. This finishes the proof of Theorem \ref{main theorem}.
\end{proof}

\section*{Acknowledgements}

This work was supported by the NSFC (Nos. 11671111, 11601103) and Heilongjiang Province Postdoctoral
Startup Foundation (LBH-Q16082).

\end{document}